\numberwithin{equation}{section}
\newtheorem{proposition}{Proposition}[section]
 \newtheorem{definition}{Definition}[section]
\newtheorem{remark}{Remark}[section]
\theoremstyle{definition}
\newtheorem{theorem}{Theorem}[section]
\theoremstyle{plain}
\newtheorem{lemma}{Lemma}[section]
 \theoremstyle{definition}
\title[]{Partial classification of irreducible modules for loop-Witt algebras}
\author[]{Priyanshu Chakraborty AND S. Eswara Rao }
\address{Priyanshu Chakraborty, Harish-Chandra Research Institute (HBNI), Chhatnag Road, Jhunsi, Prayagraj(Allahabad) 211019, Uttar Pradesh, India.}
\email{priyanshuchakraborty@hri.res.in}
\address{S. Eswara Rao, School of Mathematics, Tata Institute of Fundamental Research, Homi Bhabha Road, Colaba, Mumbai 400005, India.}
\email{sena98672@gmail.com, senapati@math.tifr.res.in}
\begin{document}

\begin{abstract} 
Consider the Lie algebra of the group of diffeomorphisms of a $n$-dimensional torus which is also known as the derivation algebra of the Laurent polynomial algebra $A$ over $n$ commuting variables, denoted by $DerA$. In this paper we consider the Lie algebra $(A\rtimes DerA)\otimes B$ for some commutative associative finitely generated unital algebra $B$ over $\mathbb C$ and classify all irreducible modules for $(A\rtimes DerA)\otimes B$ with finite dimensional weight spaces.  In particularly, we show that Larsson's constructed modules of tensor fields exhausts all irreducible modules for $(A\rtimes DerA)\otimes B$.
\end{abstract}

\maketitle

\section{Introduction} 
Importance of the representation theory of infinite dimensional Lie algebras are well known in both mathematics and physics. Many mathematicians studied representations for the Lie algebra of the derivation algebra of $A=\mathbb C[t_1^{\pm 1}, t_2^{\pm 1},...,t_n^{\pm 1}]$, so called Witt algebra denoted by $W_n$ or $DerA$, for reference \cite{1,10,11}. In particular, for $n=1$ the universal central extension for the derivation algebra of $A$ is known as the Virasoro algebra, generally denoted by $Vir$. Representations of Virasoro algebras were well studied in \cite{15,16,17} and references therein. In \cite{15}, O.Mathieu proved that a simple module for $Vir$ with finite dimensional weight spaces is either a highest weight module, a lowest weight module or a uniformly bounded module of intermediate series. Moreover from an interesting paper (\cite{12}) this result comes out that $DerA$ has no non-trivial central extensions for $n \geq 2$. In \cite{13}, Larsson constructed a large class of modules for $DerA$, so called modules of tensor fields. In \cite{1}, the second author proved that for $n \geq 2$, modules of tensor fields for $DerA$ exhausts all irreducible weight modules for $A\rtimes DerA$ with finite dimensional weight spaces. In \cite{10}, complete classification of irreducible modules for $DerA$ has been done by Yuly Billig and V. Futorny, which was conjectured by the second author. \\

On the other hand, in present days mathematicians are showing interest to study representations of loop algebras of some well known  Lie algebras. For instance, representation of current algebras (also called loop algebras) for finite dimensional simple Lie algebras has been studied in \cite{8,19}. In \cite{7}, representation of current algebras were generalized for finite dimensional perfect Lie algebras. In \cite{18}, loop Affine Lie algebras were studied, in fact any irreducible module for the well known Toroidal Lie algebras has been reduced to an irreducible module for the loop Affine Lie algebras. Further, in \cite{6,9} modules for loop Affine-Virasoro algebras and in \cite{14} modules for loop toroidal Lie algebras has been studied. For $B=\mathbb C[t^{\pm 1}]$, complete classification of irreducible $Vir\otimes B$-modules has been done in \cite{2}. Later this work was generalized by A.Savage for $Vir \otimes B$, where $B$ is a commutative associative finitely generated unital algebra over $\mathbb C$ in \cite{5}. In \cite{5}, A.Savage prove that a simple module for $Vir\otimes B$ with finite dimensional weight spaces is either highest weight module, lowest weight module or uniformly bounded module. Broadly speaking, irreducible modules for $Vir\otimes B$ has same kind of behaviour as $Vir$ modules. Classification of irreducible modules for loop-Witt algebras is a challenging open problem. Our results is one step towards that. In this paper we study irreducible weight modules for $(A\rtimes DerA)\otimes B$ with finite dimensional weight spaces, where $B$ is a commutative associative finitely generated unital algebra over $\mathbb C$. It is natural to ask whether modules of tensor fields exhausts all irreducible modules for $(A\rtimes DerA)\otimes B$ or not. Finally in Theorem \ref{t2.3}, we prove that under some natural conditions modules of tensor fields exhausts all irreducible modules for $(A\rtimes DerA)\otimes B$. \\

This paper has been organised as follows. In section 2, we start with basic definitions and preliminaries. At the end of this section we recall the Theorem \ref{t2.1}, proved in \cite{1} regarding classification of irreducible $A\rtimes DerA$ modules for all $n \geq 2$. In section 3, we define weight modules for $\tau=(A \rtimes DerA)\otimes B$ and construct an irreducible weight module for $\tau$. Then we consider an irreducible weight module $V$ for $\tau$ with finite dimensional weight spaces. In Theorem \ref{t2.2}, we prove that under some natural conditions $V$ becomes an irreducible module for $A\rtimes DerA.$ In section 4, we prove that Theorem \ref{t2.1} holds for $n=1$ too. At the end of this section, in Theorem \ref{t2.3}, we prove our final classification theorem.\\

 We would like to point out one difference between loop of finite dimensional simple Lie algebras and loop of any extension of Witt algebras. For the loop of simple finite dimensional Lie algebras we get irreducible modules as evaluation modules at multiple points, see \cite{8,7,19}. But for the loop of Witt algebras which has been studied in \cite{2,5} and the current paper, irreducible modules comes from a single points evaluation modules. So we conjecture that for the classification of loop-Witt algebras with uniformly bounded weight spaces we get a  single point evaluation module.

\section{Notations and Preliminaries}
\subsection*{(1)} Throughout the paper all the vector spaces, algebras, tensor products are taken over the field of complex numbers $\mathbb{C}$. Let $\mathbb{Z}$, $\mathbb{N}$, $\mathbb{Z}_+$ denote the sets of integers, natural numbers and non-negative integers respectively. Let $\mathbb{C}^n $ denote the direct sum of $n$ copies of the vector space $\mathbb C$. Let $e_i$, $1\leq i \leq n$ be the standard basis of $\mathbb {C}^n$ and $(,)$ be the form on $\mathbb{C}^n$ such that $(e_i,e_j)=\delta_{ij}$, for $1 \leq i,j \leq n$. For any Lie algebra $G$, let $ U (G)$ denote the universal enveloping algebra of $G$.\\

\subsection*{(2)} Let $n \geq 1$ be a fixed natural number and $A =\mathbb{C}[t_1^{\pm1},...,t_{n}^{\pm1}]$ denote the Laurent polynomial algebra. Let us denote $\mathbb{Z}^n =\displaystyle{ \bigoplus_{i=1}^{n}}\mathbb Z e_i $. We denote elements of $\mathbb{Z}^n$ by $m ,r ,s ,t $ etc. For $r=\displaystyle{ \sum_{i=1}^{n}}r_i e_i \in \mathbb{Z}^n$,
 let $t^r=t_1^{r_1}t_2^{r_2}...t_n^{r_n} \in A $ and $D^i(r)=t^r t_i \frac{d}{d t_i}$ be a derivation on $A$. Let $Der A$ be the Lie algebra of derivations on $A$. It is well known that $D^i(r)$ for $1 \leq i \leq n$ and $r \in \mathbb{Z}^n$ forms a basis for $DerA.$ For $u=\displaystyle{ \sum_{i=1}^{n}}u_i e_i \in \mathbb{C}^n $, let $D(u,r)= \displaystyle{ \sum_{i=1}^{n}}u_iD^i(r)$. Then $DerA$ is a Lie algebra with respect to the bracket operation
  \begin{align} 
 \left[ D(u,r),D(v,s) \right]=D(w,r+s),
\end{align}
 where $ w=(u,s)v -(v,r)u $ for all $r,s \in \mathbb{Z}^n$ and $u,v \in \mathbb{C}^n$.\\ 
 
Let $\mathfrak{h}$ be the subspace of $DerA$ spanned by $D^i(0)$ for $1\leq i \leq n.$ Then $\mathfrak{h}$ is a maximal abelian sub-algebra of $DerA$.\\ 

In particular when $n=1$, we consider $A=\mathbb C[t^{\pm 1}]$ and denote $d_n=t^nd$ for all $n \in \mathbb Z,$ where $d=t\frac{d}{dt}.$ Then we have $[d_m,d_n]=(n-m)d_{m+n}$ for all $m,n \in \mathbb Z.$
\subsection*{(3)} 
Let $\mathfrak{g}_1$ and $\mathfrak{g}_2 $ be two Lie algebras and $\phi : \mathfrak{g}_2 \to Der(\mathfrak{g}_1)$ be a Lie algebra homomorphism from $\mathfrak{g}_2$ to the Lie algebra of derivations of $\mathfrak{g}_1$. Then the semi-direct product of the Lie algebras $\mathfrak{g}_1$ and $\mathfrak{g}_2$ is denoted by $\mathfrak{g}_1 \rtimes \mathfrak{g}_2= \{g_1+g_2: g_1\in \mathfrak{g}_1, g_2 \in \mathfrak{g}_2 \}$ with the bracket operation 
\begin{align}
\left[ g_1+g_2,h_1+h_2\right] = \left[g_1,h_1\right] +\left[g_2,h_2\right] +\phi(g_2)h_1 - \phi(h_2)g_1.
\end{align} 
\subsection*{(4)} Consider the Lie algebra $A\rtimes DerA$ with the Lie bracket given by 
\begin{align}
\left[ t^r,t^s\right]=0, \\
\left[ D(u,r), t^s\right]=(u,s)t^{r+s}
\end{align}
for all $r,s \in \mathbb{Z}^n$ and $u \in \mathbb{C}^n$. Note that $A$ is an ideal in $A\rtimes DerA$. Let $ H=\mathbb C \oplus \mathfrak{h}$ be a Cartan sub-algebra of $A\rtimes DerA.$ 

\subsection*{(5)} Let $B$ be a commutative associative finitely generated unital algebra over $\mathbb C$. Consider $\tau = (A \rtimes DerA )\otimes B$ and define a Lie algebra structure on $\tau$ by $$\left[X\otimes b, Y\otimes b'\right]=\left[X,Y\right]\otimes bb',$$ for all $X,Y \in A \rtimes DerA$, $b,b' \in B$. We will identify $H\otimes 1$ with $H$. For convenience, we denote $D(u,r)\otimes b=D(u,r)b$ and $t^r\otimes b=t^rb$ for all $u \in \mathbb C^n$, $r \in \mathbb Z^n$, $b\in B.$  

\subsection*{(6)}
Let $\mathfrak{gl}_n$ be the Lie algebra of $n \times n$ matrices over $\mathbb C$. Then $\mathfrak{gl}_n =\mathfrak{sl}_n \oplus \mathbb C I $, where  $\mathfrak{sl}_n$ be the subalgebra of $\mathfrak{gl}_n$ consisting of trace zero matrices and $I$ be the $n\times n$ identity matrix. Let $V(\mu)$ be a finite dimensional irreducible module for $\mathfrak{sl}_n$, where $\mu$ is a dominant integral weight. Let $I$ act by a scalar $c$ on $V(\mu)$ and denote the resultant finite dimensional irreducible $\mathfrak{gl}_n$ module by $V(\mu,c).$ For $\alpha \in \mathbb{C}^n,$ we denote $F^{\alpha}(\mu,c)=V(\mu,c)\otimes A.$ We first recall $A\rtimes DerA$ modules which have been studied in \cite{1}.\\

\begin{theorem}\label{t2.1}$($Theorem 2.9; \cite{1}$)$
Let $n\geq 2$ and $V$ be an irreducible module for $A \rtimes DerA$ which is also a weight module for $H$ with finite dimensional weight spaces. Also assume\\
$(1)$. $V$ is an $A$-module as an associative algebra and Lie module structure of $A$ comes from associative algebra.\\
$(2)$. $1.v=v$ for all $v \in V$.\\
Then $V \simeq F^{\alpha}(\mu,c)$ for some $\alpha, c, \mu$ and actions of $A\rtimes DerA$ on $F^{\alpha}(\mu,c)$ are given by
\begin{align}
D(u,r)v\otimes t^m={(u,m+\alpha)v\otimes t^{m+r}+\displaystyle{\sum_{i,j}(u_ir_jE_{ji}v)\otimes t^{m+r}}}, \\
t^r v \otimes t^m=v \otimes t^{m+r}. 
\end{align}
 
\end{theorem}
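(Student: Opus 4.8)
The plan is to realize $V$ as a free $A$-module of finite rank, transport the $DerA$-action to a family of operators on a single weight space $W$, and identify $W$ as a finite-dimensional $\mathfrak{gl}_n$-module from which the formulas can be read off.

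First I would exploit condition $(1)$, that $A$ acts as a unital associative algebra, together with the weight decomposition $V=\bigoplus_\lambda V_\lambda$ for $H=\mathbb C\oplus\mathfrak h$ (with $1$ acting as the identity by $(2)$). Since $[D^i(0),t^s]=s_i t^s$, each $t^s$ maps $V_\lambda\to V_{\lambda+s}$, and because $t^s$ is invertible with inverse $t^{-s}$ it is a bijection there. As $A$ and $DerA$ shift weights only by $\mathbb Z^n$, irreducibility forces the support to be a single coset $\alpha+\mathbb Z^n$; all weight spaces are then isomorphic to the finite-dimensional space $W:=V_\alpha$, and $v\mapsto (t^{-m}v)\otimes t^m$ for $v\in V_{\alpha+m}$ gives an isomorphism of $A$-modules $V\cong W\otimes A$, with $A$ acting by multiplication on the second factor.

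Next I would determine the shape of the $DerA$-action. Setting $P(u,r)=t^{-r}D(u,r)$, which is weight preserving, the relation $[D(u,r),t^s]=(u,s)t^{r+s}$ transforms into $[P(u,r),t^s]=(u,s)t^s$, visibly independent of $r$. Hence $P(u,r)-P(u,0)$ commutes with all $t^s$ and preserves weights, so on $W\otimes A$ it has the form $\phi(u,r)\otimes\mathrm{id}$ for a unique $\phi(u,r)\in\mathrm{End}(W)$. Since $P(u,0)=D(u,0)$ acts on $V_{\alpha+m}$ by the scalar $(u,\alpha+m)$, this yields
\[
D(u,r)(w\otimes t^m)=\bigl((u,\alpha+m)+\phi(u,r)\bigr)w\otimes t^{m+r},
\]
where $\phi$ is linear in $u$ and $\phi(u,0)=0$. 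Imposing $[D(u,r),D(v,s)]=D(w,r+s)$ with $w=(u,s)v-(v,r)u$, the scalar contributions cancel automatically and the operator parts give the functional equation
\[
[\phi(u,r),\phi(v,s)]=(u,s)\bigl(\phi(v,r+s)-\phi(v,s)\bigr)-(v,r)\bigl(\phi(u,r+s)-\phi(u,r)\bigr).
\]

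The hard part is to deduce from this identity, using $\phi(u,0)=0$, linearity in $u$, and $\dim W<\infty$, that $\phi$ is linear in $r$; this is exactly where the hypothesis $n\geq 2$ enters. A priori $\phi(u,\cdot)$ could grow with $r$, and the obstacle is to show the defect $\phi(u,r+s)-\phi(u,r)-\phi(u,s)$ vanishes: choosing $r,s$ linearly independent and iterating the functional equation along the rank-$\geq 2$ lattice, finite-dimensionality of $\mathrm{End}(W)$ precludes an unbounded cocycle and forces linearity. Writing then $\phi(u,r)=\sum_{i,j}u_i r_j E_{ji}$, the functional equation is equivalent to the $\mathfrak{gl}_n$ relations for the $E_{ji}$, so $W$ becomes a finite-dimensional $\mathfrak{gl}_n$-module. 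Finally I would transfer irreducibility: a subspace $W'\subseteq W$ is $\mathfrak{gl}_n$-stable if and only if $W'\otimes A$ is an $A\rtimes DerA$-submodule of $V$, so irreducibility of $V$ forces $W$ to be an irreducible finite-dimensional $\mathfrak{gl}_n$-module, i.e.\ $W\cong V(\mu,c)$ for a dominant integral weight $\mu$ and the scalar $c$ by which $I$ acts. Reading off the displayed action then gives $V\cong F^{\alpha}(\mu,c)$ together with the stated formulas.
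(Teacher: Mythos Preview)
The paper does not prove Theorem~\ref{t2.1}; it is quoted as Theorem~2.9 of \cite{1} and merely recalled at the end of Section~2. The only related argument the paper itself supplies is in Section~4, where the $n=1$ analogue is established (Propositions~\ref{p2.2} and~\ref{p2.3}): there too $V$ is identified with $V_\alpha\otimes A$, but instead of your operators $\phi(u,r)$ the authors work with $I(r)=(t^r-1)d$ and the filtration $\mathbf{m}^k d$, using the characteristic-polynomial trick of Lemma~\ref{l2.5} to kill high powers of $\mathbf{m}$. So there is no ``paper's own proof'' of the $n\ge 2$ statement to compare against.

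That said, your outline does follow the strategy of \cite{1}: realize $V\cong W\otimes A$, extract weight-zero operators $\phi(u,r)=t^{-r}D(u,r)-D(u,0)$ on $W$, derive the cocycle identity, and identify $W$ with a $\mathfrak{gl}_n$-module. The place where your proposal is genuinely incomplete is precisely the step you flag as ``the hard part'': saying that finite-dimensionality of $\mathrm{End}(W)$ ``precludes an unbounded cocycle and forces linearity'' is not an argument. In \cite{1} this step is the bulk of the work and is not a growth estimate; one first shows the span of the $\phi(u,r)$ is a finite-dimensional Lie algebra acting irreducibly on $W$, and then analyzes its structure (this is where $n\ge 2$ is used in an essential way) to force $\phi(u,r)=\sum_{i,j}u_ir_jE_{ji}$. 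Without a concrete mechanism for that reduction your sketch stops short of a proof, although the surrounding scaffolding---the functional equation you derive and the correspondence between $\mathfrak{gl}_n$-submodules of $W$ and $A\rtimes DerA$-submodules of $V$---is correct.
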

\section{}
\begin{definition}
A module $V$ for $\tau$ is said to be weight module if the action of $H$ on $V$ is diagonalizable, i.e $V$ can be decomposed as $V = \displaystyle{\bigoplus_{\lambda \in H^*} V_\lambda}$, where $H^*=Hom_{\mathbb C}(H, \mathbb C)$ and  $V_\lambda=\{ v \in V: hv=\lambda(h)v $ for all $h \in H \}.$ The space $V_\lambda$ is called the weight space with respect to the weight $\lambda$ of $V$.
\end{definition}
Let $\eta:B \to \mathbb C$ be an algebra homomorphism such that $\eta(1)=1$. Define a $\tau$-module action on $F^\alpha(\mu , c)$ by 
\begin{align}
D(u,r)bv\otimes t^m=\eta(b)\{(u,m+\alpha)v\otimes t^{m+r}+\displaystyle{\sum_{i,j}(u_ir_jE_{ji}v)\otimes t^{m+r}}   \},\\
t^rb.v\otimes t^m=\eta(b)v\otimes t^{m+r}.
 \end{align}
It is easy to see that $F^\alpha(\mu , c)$ is a weight module for $\tau$ with finite dimensional weight spaces $V(\mu)\otimes t^m$.  By Theorem \ref{t2.1},  $F^\alpha(\mu , c)$  is an irreducible module for $\tau$.\\

The main purpose of this paper is to prove that under some natural conditions $F^\alpha(\mu,c)$ exhausts all irreducible weight modules for $\tau$ with finite dimensional weight spaces.\\

Let $V$ be an irreducible weight module for $\tau$ with finite dimensional weight spaces. Also assume that the action of $A \otimes B$ on $V$ is associative and $ 1$ acts as identity on $V$. Throughout this section we consider $V$ as a $\tau$-module with these properties. \\ 

Let us assume that there exists a weight $\lambda \in H^*$ such that $V_\lambda \neq 0$. Then due to irreducibility of $V$ we have $V=\displaystyle{\bigoplus_{m \in \mathbb{Z}^n}}V_{\alpha+m},$ where $V_{\alpha+m}= \{ v \in V: D(u,0)v=(u,m+\alpha)v $ for all $u \in \mathbb{C}^n \}$ and $\alpha=(\lambda(D^1(0),\lambda(D^2(0),....,\lambda(D^n(0))\in \mathbb{C}^n.$  Note that $V_\lambda= V_\alpha $ and $t^k$ is injective, hence $t^k V_\alpha=V_{\alpha+k}$ for all $k \in \mathbb{Z}^n$. In particular, all $V_{\alpha+m}$ have equal dimension. Therefore we can  identify $V$ with $V_\alpha \otimes A$ as vector spaces. Since $1\otimes B$ is central in $\tau$ 
there exists an algebra homomorphism $\psi : B \to \mathbb C$ such that $1\otimes b.v=\psi(b)v$ for all $v \in V$. Let $M$ be the kernel of this homomorphism. Then $M$ is a maximal ideal in $B$, since $\psi(1)=1.$\\

 Let $U$ be the universal enveloping algebra of $\tau$. Then $U=\displaystyle{\bigoplus_{m \in \mathbb{Z}^n}}U_m,$ where $U_m= \{ X \in U: [D(u,0),X]=(u,m)X $ for all $u \in \mathbb{C}^n \}$. Let $L$ be the two sided ideal in $U$ generated by $t^rb.t^sb'-t^{r+s}bb'$ and $t^0 \otimes 1 -1.$ Since $A\otimes B$ acts associatively on $V$ so $L$ acts trivially on $V$ and hence $V$ is an irreducible $U/L$-module. \\

Let us consider the elements $T(u,r,b_1,b_2)=t^{-r}b_1D(u,r)b_2  \in U/L$ for all $r \in \mathbb {Z}^n, u \in \mathbb{C}^n, b_1,b_2 \in B$ and define  $$ D =span\{ T(u,r,b_1,b_2):r \in \mathbb {Z}^n, u \in \mathbb{C}^n, b_1,b_2 \in B  \}. $$

\begin{lemma}\label{l2.1}
 1. $D$ is a Lie algebra with the Lie bracket $$[T(u,r,b_1,b_2),T(v,s,b_3,b_4)]=T(w,r+s,b_1b_3,b_2b_4)-(u,s)T(v,s,b_1b_2b_3,b_4)$$ $$+(v,r)T(u,r,b_1b_3b_4,b_2),$$ where $w=(u,s)v-(v,r)u.$\\
 
2. $V_\alpha$ is an irreducible $D$-module.
\end{lemma}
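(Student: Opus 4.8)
The plan is to carry out both parts as computations inside the associative algebra $U/L$, where the collapsing relations $t^r b\cdot t^s b' = t^{r+s}bb'$ and $t^0\otimes 1 = 1$ are available, and where the commutation relations of $\tau$ still govern how derivation-type and function-type elements pass one another.

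For part 1, since $D$ is a subspace of the associative algebra $U/L$ and the bracket in question is the commutator, antisymmetry and the Jacobi identity come for free; the only thing to verify is that $D$ is closed, i.e. that $[T(u,r,b_1,b_2),T(v,s,b_3,b_4)]$ is again a combination of $T$'s. I would compute this directly. Writing $X=t^{-r}b_1 D(u,r)b_2$ and $Y=t^{-s}b_3 D(v,s)b_4$, I would move $D(u,r)b_2$ past $t^{-s}b_3$ inside $XY$, and symmetrically in $YX$, using $[D(u,r)b_2,t^{-s}b_3]=-(u,s)t^{r-s}b_2b_3$, and then merge the resulting $t$-powers by associativity. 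The two ``diagonal'' terms combine through $[D(u,r)b_2,D(v,s)b_4]=D(w,r+s)b_2b_4$ with $w=(u,s)v-(v,r)u$ to give $T(w,r+s,b_1b_3,b_2b_4)$, while the correction terms produce exactly $-(u,s)T(v,s,b_1b_2b_3,b_4)$ and $+(v,r)T(u,r,b_1b_3b_4,b_2)$, reproducing the stated formula.

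For part 2, I would first record that each $T(u,r,b_1,b_2)$ lies in $U_0$: a short Leibniz computation gives $[D(u',0),T]=0$, so $D$ preserves every weight space and in particular acts on $V_\alpha$. To prove irreducibility, let $W$ be a nonzero $D$-submodule of $V_\alpha$ and set $\widetilde W=\bigoplus_{m\in\mathbb Z^n}t^m W$, a nonzero subspace of $V$ with $\widetilde W\cap V_\alpha=W$ because $t^m V_\alpha=V_{\alpha+m}$. The crux is to show $\widetilde W$ is a $\tau$-submodule. Closure under $t^s b$ is immediate since $t^s b\cdot t^m W=\psi(b)\,t^{s+m}W\subseteq\widetilde W$, using that $1\otimes b$ acts by the scalar $\psi(b)$. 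Closure under $D(v,s)b$ is where the $D$-module structure is used: from the factorization $D(v,s)b=t^s\,T(v,s,1,b)$, valid in $U/L$, together with $[D(v,s)b,t^m]=(v,m)t^{s+m}b$, one gets $D(v,s)b\cdot t^m w=t^{m+s}\big(T(v,s,1,b)w\big)+(v,m)\psi(b)\,t^{s+m}w$, and both terms lie in $t^{m+s}W\subseteq\widetilde W$ since $T(v,s,1,b)w\in W$ by $D$-stability. Thus $\widetilde W$ is $\tau$-stable, irreducibility of $V$ forces $\widetilde W=V$, and intersecting with $V_\alpha$ yields $W=V_\alpha$.

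The routine but error-prone part is the sign and $B$-multiplication bookkeeping in part 1, where one must apply $(u,-s)=-(u,s)$ and the associativity relations consistently. The conceptual heart is the factorization $D(v,s)b=t^s\,T(v,s,1,b)$ used in part 2: it is precisely this identity, made available by passing to $U/L$, that lets the $D$-action on $V_\alpha$ control the full $\tau$-action on $\widetilde W$, so the main step to get right is the verification that $\widetilde W$ is stable under the derivation generators.
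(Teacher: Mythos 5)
Your proof of part 1 is essentially the paper's own argument: both expand the commutator in $U/L$, move the derivation factors past the $t$-power factors using $[D(u,r)b_2,t^{-s}b_3]=-(u,s)t^{r-s}b_2b_3$, merge the $t$-powers via the associativity relations, and collect the diagonal term $T(w,r+s,b_1b_3,b_2b_4)$ with the two correction terms. For part 2, however, you take a genuinely different route. The paper argues top-down: given nonzero $v,w\in V_\alpha$, irreducibility of $V$ plus a weight argument produces $X\in U_0$ with $Xv=w$, and then the paper asserts, ``by the PBW theorem,'' that $U(D)=U_0$ inside $U/L$, so that $X$ is already a polynomial in elements of $D$. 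You instead argue bottom-up: from a nonzero $D$-submodule $W\subseteq V_\alpha$ you build $\widetilde W=\bigoplus_m t^mW$ and verify directly, using the factorization $D(v,s)b=t^sT(v,s,1,b)$ in $U/L$ together with $t^{s+m}b\cdot w=\psi(b)t^{s+m}w$, that $\widetilde W$ is $\tau$-stable; irreducibility of $V$ then forces $\widetilde W=V$ and hence $W=\widetilde W\cap V_\alpha=V_\alpha$. Your version is more self-contained: the paper's key claim $U(D)=U_0$ is stated without proof and actually requires a rewriting argument (every weight-zero monomial in $U/L$ must be expressed through the $T$'s using the relations of $L$), whereas your submodule construction sidesteps this entirely and uses only the two relations defining $L$ and the scalar action $\psi$ of $1\otimes B$, which the paper establishes before the lemma. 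What the paper's approach buys in exchange is brevity and the explicit structural statement that the zero-weight subalgebra of $U/L$ is generated by $D$, a fact that is conceptually useful for the correspondence between $\tau$-modules and $D$-modules exploited later; your argument proves only the irreducibility statement itself, but proves it completely.
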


\begin{proof}
1. $[t^{-r}b_1D(u,r)b_2, t^{-s}b_3D(v,s)b_4]$\\

$=[t^{-r}b_1D(u,r)b_2, t^{-s}b_3]D(v,s)b_4+t^{-s}b_3[t^{-r}b_1D(u,r)b_2, D(v,s)b_4]$\\

$=-(u,s)t^{-s}b_1b_2b_3D(v,s)b_4+(v,r)t^{-r}b_1b_4b_3D(u,r)b_2+t^{-r-s}b_1b_3D(w,r+s)b_2b_4$\\

$=T(w,r+s,b_1b_3,b_2b_4)-(u,s)T(v,s,b_1b_2b_3,b_4)+(v,r)T(u,r,b_1b_3b_4,b_2),$\\

 where $w=(u,s)v-(v,r)u.$\\

2. Check that $D(w,0)$ commutes with $D$ for all $w \in \mathbb{C}^n$, consequently $V_\alpha$ is a $D$-module. 
 Let $v,w \in V_\alpha$ be two arbitrary non-zero vectors. Then by weight arguments and the irreducibility of $V$ implies that there exists $X \in U_0$ such that $X.v=w$.
By the PBW theorem we have $U(D)=U_0$ and hence $V_\alpha$ is an irreducible $D$-module.
\end{proof}

\begin{lemma}$($ \cite{3}$)$\label{l2.2}
1. Let $L \subset \mathfrak{gl}(V)$ $(V$ is finite dimensional $)$ be a non-zero Lie algebra and acting irreducibly on $V$.  Then $L$ is reductive and its center is at most one-dimensional. \\
2. Let $L$ be a finite dimensional reductive Lie algebra. Then $L=[L,L]\oplus Z(L)$ and $[L,L]$ is semi-simple.
\end{lemma}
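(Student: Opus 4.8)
The plan is to prove the two parts by independent classical arguments, both specific to the ground field $\mathbb{C}$ (characteristic zero). Part 1 is a statement about irreducible finite dimensional representations, while part 2 is pure structure theory of Lie algebras.

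For part 1, the strategy is to pin down the radical $R=\operatorname{rad}(L)$ (the maximal solvable ideal of $L$) and to show that it must act on $V$ by scalars. First I would apply Lie's theorem: since $R$ is solvable and acts on the finite dimensional complex space $V$, there is a common eigenvector $v_0\neq 0$ and a linear functional $\lambda\colon R\to\mathbb{C}$ with $r\cdot v_0=\lambda(r)v_0$ for all $r\in R$. Next I would introduce the weight space $W=\{v\in V:\ r\cdot v=\lambda(r)v\ \text{for all}\ r\in R\}$ and show that it is $L$-invariant. Because $V$ is irreducible as an $L$-module and $W\neq 0$, this forces $W=V$, so every $r\in R$ acts as $\lambda(r)$ times the identity.

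The main obstacle is exactly the $L$-invariance of $W$ (the classical invariance lemma). For $x\in L$, $v\in W$, and any $r\in R$, one computes $r\cdot(x\cdot v)=\lambda(r)(x\cdot v)+\lambda([r,x])v$, using that $[r,x]\in R$ since $R$ is an ideal; so it suffices to prove $\lambda([L,R])=0$. This I would obtain by the standard trace argument: the cyclic subspace $U$ spanned by $v_0,xv_0,x^2v_0,\dots$ is invariant under both $x$ and $R$, and each $r\in R$ acts on $U$ by an upper triangular matrix with constant diagonal entry $\lambda(r)$; hence the trace of $[x,r]$ on $U$ equals $(\dim U)\lambda([x,r])$, while this trace also vanishes since $[x,r]$ is a commutator of operators on $U$, forcing $\lambda([x,r])=0$ because $\dim U\neq 0$ in characteristic zero. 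Once $R$ acts by scalars, faithfulness of the inclusion $L\subset\mathfrak{gl}(V)$ gives $[L,R]=0$, so $R\subseteq Z(L)$; the reverse inclusion $Z(L)\subseteq R$ is automatic, whence $R=Z(L)$ and $L$ is reductive. Finally $\lambda$ restricted to $Z(L)=R$ is injective, since a central element acting by the scalar $0$ must be $0$ by faithfulness; thus $Z(L)$ embeds into $\mathbb{C}$ and $\dim Z(L)\leq 1$.

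For part 2, I would invoke the Levi decomposition $L=S\ltimes\operatorname{rad}(L)$, where $S\cong L/\operatorname{rad}(L)$ is semisimple. Since $L$ is reductive, $\operatorname{rad}(L)=Z(L)$ is central, so the action of $S$ on $\operatorname{rad}(L)$ is trivial and the semidirect product collapses to a direct sum of ideals, $L=S\oplus Z(L)$. Computing the derived subalgebra then gives $[L,L]=[S,S]=S$: the mixed terms vanish because $Z(L)$ is central, and $[S,S]=S$ because $S$ is semisimple. Hence $[L,L]=S$ is semisimple and $L=[L,L]\oplus Z(L)$, as required. The only nontrivial input here is Levi's theorem; the rest is a direct computation.
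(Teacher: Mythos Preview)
The paper does not supply a proof of this lemma at all; it simply cites Humphreys' textbook \cite{3} and moves on. Your argument is correct and is essentially the standard proof found there (Lie's theorem applied to the radical plus the invariance lemma for part~1, Levi decomposition for part~2), so there is no divergence to discuss.
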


Let us consider the elements $T_1(u,r,b_1,b_2)=t^{-r}b_1D(u,r)b_2 -D(u,0)b_1b_2$ and define 
  $$D_1=span\{ T_1(u,r,b_1,b_2):r \in \mathbb {Z}^n, u \in \mathbb{C}^n, b_1,b_2 \in B  \}.$$
Also let $I(u,r,b_1,b_2)= \psi(b_1)D(u,r)b_2-D(u,0)b_1b_2$ and define  $$D_2 =span\{ I(u,r,b_1,b_2):r \in \mathbb {Z}^n, u \in \mathbb{C}^n, b_1,b_2 \in B  \}. $$
Moreover consider the subspace $W$ of $V,$ defined by $$W= span\{t^rv-v: v\in V_\alpha, r \in \mathbb{Z}^n\}=span\{t^rv-v: v\in V, r \in \mathbb{Z}^n\}$$  Note that $dim(V/W)=dim(V_\alpha)$.\\

\begin{proposition}\label{p2.1}

1. $D_1$ is a Lie algebra with the 
lie bracket
 $$[T_1(u,r,b_1,b_2),T_1(v,s,b_3,b_4)]= T_1(w,r+s,b_1b_3,b_2b_4)-(u,s)T_1(v,s,b_3,b_1b_2b_4)$$ $$+(v,r)T_1(u,r,b_1,b_2b_3b_4),$$ where $w=(u,s)v-(v,r)u$. Moreover $[D,D]\subseteq D_1.$\\
 
2. $D_2$ is a Lie sub-algebra of $DerA \otimes B$ with the Lie bracket $$[I(u,r,b_1,b_2),I(v,s,b_3,b_4)]= I(w,r+s,b_1b_3,b_2b_4)-(u,s)I(v,s,b_3,b_1b_2b_4)$$  $$+(v,r)I(u,r,b_1,b_2b_3b_4),$$ where $w=(u,s)v-(v,r)u$.\\
3. $\pi: D_1 \to D_2$ defined by $\pi(T_1(u,r,b_1,b_2)=I(u,r,b_1,b_2)$ is a surjective Lie algebra homomorphism.\\
$4$. $V_\alpha$ is an irreducible $D_1$-module.\\
5. $W$ is a $D_2$-sub-module. In particular $V/W$ is a $D_2$-module and a $D_1$-module via $\pi$. \\
6. $V_\alpha \simeq V/W$ as $D_1$-module.
\end{proposition}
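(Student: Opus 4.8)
The plan is to dispatch the Lie-algebraic parts (1)--(3) by direct computation, using the bracket of Lemma \ref{l2.1} as the engine, and then to extract the representation-theoretic content of (4)--(6) from it. For (1) I would substitute $T_1(u,r,b_1,b_2)=T(u,r,b_1,b_2)-D(u,0)b_1b_2$ into the bracket formula already established in Lemma \ref{l2.1}. The decisive observation is that when one collects the terms of $[T(u,r,b_1,b_2),T(v,s,b_3,b_4)]$ of the form $D(\cdot,0)b_1b_2b_3b_4$, they read $\big(D(w,0)-(u,s)D(v,0)+(v,r)D(u,0)\big)b_1b_2b_3b_4$, and since $w=(u,s)v-(v,r)u$ gives $D(w,0)=(u,s)D(v,0)-(v,r)D(u,0)$ by linearity in the first slot, this combination vanishes identically. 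Hence $[T,T]$ is a combination of $T_1$'s, which proves $[D,D]\subseteq D_1$; computing $[T_1,T_1]$ in the same way (now also handling the extra $D(\cdot,0)$-terms, which is routine as $[D(u,0),D(v,0)]=0$ and $[D(v,0)c,t^{-r}b]=-(v,r)t^{-r}bc$) yields the stated bracket and shows $D_1$ is closed. Part (2) is the analogous computation inside $DerA\otimes B$, after noting that each $I(u,r,b_1,b_2)$ visibly lies in $DerA\otimes B$. For (3) I would observe that $T_1$ and $I$ depend on $(u,b_1,b_2)$ through the same multilinear pattern and that the two bracket formulas in (1) and (2) coincide termwise; well-definedness of $\pi$ then reduces to checking that every linear relation among the $T_1$'s is matched by one among the $I$'s, after which the homomorphism property is immediate and surjectivity is clear since $\pi$ hits each generator of $D_2$.

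Part (4) is where I expect the real work, and it is the one place a structural theorem is needed rather than a computation. First note $D_1\subseteq D$: in $U/L$ one has $D(u,0)b_1b_2=T(u,0,1,b_1b_2)$ (using $t^0\otimes 1=1$), so $T_1(u,r,b_1,b_2)=T(u,r,b_1,b_2)-T(u,0,1,b_1b_2)\in D$; in particular $D_1$ preserves $V_\alpha$ and $\overline{D_1}\subseteq\overline D\subseteq\mathfrak{gl}(V_\alpha)$, writing $\overline{(\cdot)}$ for the image operator algebra. By Lemma \ref{l2.1}(2), $\overline D$ acts irreducibly on the finite-dimensional space $V_\alpha$, so Lemma \ref{l2.2} gives that $\overline D$ is reductive with $\overline D=[\overline D,\overline D]\oplus Z(\overline D)$ and $\dim Z(\overline D)\le 1$; Schur's lemma forces $Z(\overline D)$ to act by scalars. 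Consequently a subspace of $V_\alpha$ is $\overline D$-stable if and only if it is $[\overline D,\overline D]$-stable, so $V_\alpha$ is already irreducible over $[\overline D,\overline D]=\overline{[D,D]}$. Since $[D,D]\subseteq D_1$ by part (1), we get $[\overline D,\overline D]\subseteq\overline{D_1}$, and an algebra containing a subalgebra that acts irreducibly must itself act irreducibly; hence $V_\alpha$ is irreducible over $D_1$.

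For (5) I would compute $g\cdot(t^rv-v)$ for $g=I(u,p,b_1,b_2)$ directly, using $[D(u,p)\otimes b,\,t^r\otimes 1]=(u,r)t^{p+r}\otimes b$ to commute $g$ past $t^r$. This produces $t^rg(v)-g(v)$, which lies in $W$ by definition, plus a correction term $(u,r)\big(\psi(b_1)t^{p+r}b_2v-t^rb_1b_2v\big)$; here associativity of the $A\otimes B$-action together with $1\otimes b.v=\psi(b)v$ lets me rewrite $t^{p+r}b_2v=\psi(b_2)t^{p+r}v$ and $t^rb_1b_2v=\psi(b_1)\psi(b_2)t^rv$, so the correction equals $(u,r)\psi(b_1)\psi(b_2)\big((t^{p+r}v-v)-(t^rv-v)\big)\in W$. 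Thus $W$ is $D_2$-stable, $V/W$ is a $D_2$-module, and a $D_1$-module through $\pi$. For (6) I would take the natural map $\phi:V_\alpha\to V/W$, $v\mapsto v+W$; it is surjective because any $v_m\in V_{\alpha+m}$ satisfies $v_m=t^m(t^{-m}v_m)\equiv t^{-m}v_m\pmod W$ with $t^{-m}v_m\in V_\alpha$, and it is therefore a linear isomorphism since $\dim(V/W)=\dim V_\alpha$. Finally $\phi$ intertwines the $D_1$-actions, because for $v\in V_\alpha$ the difference $T_1(u,r,b_1,b_2)v-I(u,r,b_1,b_2)v=(t^{-r}b_1-\psi(b_1))D(u,r)b_2v=\psi(b_1)\big(t^{-r}w-w\big)$, with $w=D(u,r)b_2v$, lies in $W$; hence $\phi$ is an isomorphism of $D_1$-modules.

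The main obstacle, and the only step that is not bookkeeping, is part (4): transferring irreducibility from $D$ to the a priori smaller algebra $D_1$. This rests on two independent inputs coming together---the purely algebraic cancellation of the $D(\cdot,0)$-terms that yields $[D,D]\subseteq D_1$ in part (1), and the reductivity dichotomy of Lemma \ref{l2.2}, which lets the semisimple part $[\overline D,\overline D]$ inherit irreducibility once the at most one-dimensional center is shown to act by scalars. A secondary point demanding care throughout is the consistent use of the defining relations of $U/L$ together with $1\otimes b.v=\psi(b)v$, as these are exactly what identify $T_1$ with $I$ modulo $W$ and make parts (5) and (6) go through.
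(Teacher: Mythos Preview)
Your proposal is correct and follows essentially the same route as the paper: direct computation for (1)--(3) using Lemma~\ref{l2.1}, the reductivity argument via Lemma~\ref{l2.2} together with $[D,D]\subseteq D_1$ for (4), a direct check that $W$ is $D_2$-stable for (5), and the natural quotient map $V_\alpha\to V/W$ for (6). The only cosmetic differences are that in (6) you argue surjectivity first and then invoke equal dimension, whereas the paper uses irreducibility of $V_\alpha$ to get injectivity first; and you flag the well-definedness of $\pi$ in (3) more explicitly than the paper does.
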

\begin{proof}
1. From Lemma \ref{l2.1} we have\\

$[T(u,r,b_1,b_2),T(v,s,b_3,b_4)]$\\

$=T(w,r+s,b_1b_3,b_2b_4)-(u,s)T(v,s,b_1b_2b_3,b_4)+(v,r)T(u,r,b_1b_3b_4,b_2)$\\

$=\{T(w,r+s,b_1b_3,b_2b_4)-D(w,0)b_1b_2b_3b_4\} -(u,s)\{T(v,s,b_1b_2b_3,b_4)-D(v,0)b_1b_2b_3b_4\}$ \\

 \hspace*{3cm} $+(v,r)\{T(u,r,b_1b_3b_4,b_2)-D(u,0)b_1b_2b_3b_4\}  $\\

$=T_1(w,r+s,b_1b_3,b_2b_4)-(u,s)T_1(v,s,b_1b_2b_3,b_4)+(v,r)T_1(u,r,b_1b_3b_4,b_2)$.\\

From above relation we have $[D,D]\subseteq D_1.$\\

Now $[T_1(u,r,b_1,b_2),T_1(v,s,b_3,b_4)]$\\

$=[t^{-r}b_1D(u,r)b_2,t^{-s}b_3D(v,s)b_4]-[D(u,0)b_1b_2,t^{-s}b_3D(v,s)b_4]+[D(v,0)b_3b_4,t^{-r}b_1D(u,r)b_2]$\\

$=[T(u,r,b_1b_2), T(v,s,b_3b_4)]-\{ [D(u,0)b_1b_2,t^{-s}b_3]D(v,s)b_4+t^{-s}b_3[D(u,0)b_1b_2,D(v,s)b_4]\}$ \\

   \hspace*{3cm} $+\{[D(v,0)b_3b_4,t^{-r}b_1]D(u,r)b_2 + t^{-r}b_1[D(v,0)b_3b_4,D(u,r)b_2]    \}$\\

$=[T(u,r,b_1b_2), T(v,s,b_3b_4)]+(u,s)T(v,s,b_1b_2b_3,b_4)-(u,s)T(v,s,b_3,b_1b_2b_4) $ \\ 

\hspace*{3cm} $-(v,r)T(u,r,b_1b_3b_4,b_2)+(v,r)T(u,r,b_1,b_2b_3b_4)$\\

$=T(w,r+s,b_1b_3,b_2b_4)-(u,s)T(v,s,b_3,b_1b_2b_4)+(v,r)T(u,r,b_1,b_2b_3b_4)$ (using Lemma \ref{l2.1}(1)) \\

$=T_1(w,r+s,b_1b_3,b_2b_4)-(u,s)T_1(v,s,b_3,b_1b_2b_4)+(v,r)T_1(u,r,b_1,b_2b_3b_4).$\\

2. $[I(u,r,b_1,b_2),I(v,s,b_3,b_4)]$\\

$=[\psi(b_1)D(u,r)b_2-D(u,0)b_1b_2,\psi(b_3)D(v,0)b_4-D(v,0)b_3b_4]$\\

$=\psi(b_1b_3)[D(u,r)b_2,D(v,s)b_4]+(v,r)\psi(b_1)D(u,r)b_2b_3b_4-(u,s)\psi(b_3)D(v,s)b_1b_2b_4$\\

$= \psi(b_1b_3)D(w,r+s)b_2b_4+(v,r)\psi(b_1)D(u,r)b_2b_3b_4-(u,s)\psi(b_3)D(v,s)b_1b_2b_4$\\

$=I(w,r+s,b_1b_3,b_2b_4)+(v,r)I(u,r,b_1,b_2b_3b_4)-(u,s)I(v,s,b_3,b_1b_2b_4)$.\\

3. Follows from 1 and 2.\\
 
$4$. Check that $D(w,0)$ commutes with $D_1$ for all $w \in \mathbb{C}^n$. Therefore $V_\alpha$ is a $D_1$-module. By Lemma \ref{l2.1}, we have $V_\alpha$ is an irreducible $D$-module. Let $\phi: D \to End(V_\alpha)$ be the representation. Then by Lemma \ref{l2.2}, we have $\phi(D)$ is reductive and $\phi(D)=Z(\phi(D)) \oplus \phi([D,D]).$ Since $V_\alpha$ is irreducible,  $Z(\phi(D))$ acts as scalar on $V_\alpha$. Consequently $V_\alpha$ is an irreducible module for $[D,D]\subset D_1.$ \\

5. Since the action of $A\otimes B$ is associative, so $t^sb.v=t^s\otimes 1 . 1 \otimes b.v=\psi(b)t^sv$, for all $s\in \mathbb Z^n, b\in B$ and $v \in V.$ \\

Now, $(\psi(b_1)D(u,r)b_2-D(u,0)b_1b_2).(t^sv-v)$\\

$=\psi(b_1)[t^s(D(u,r)b_2.v+(u,s)t^{s+r}b_2.v-D(u,r)b_2.v] -[t^sD(u,0)b_1b_2.v+(u.s)t^sb_1b_2.v-D(u,0)b_1b_2.v]
$\\

$= \psi(b_1)[t^s(D(u,r)b_2.v-D(u,r)b_2.v]+(u,s)\psi(b_1)[t^{s+r}b_2.v-t^sb_2.v]- [t^sD(u,0)b_1b_2.v-D(u,0)b_1b_2.v] \in W
$.\\

6. Define $\phi: V \to V/W$ by  $\phi(v)=\overline v 
$ for all $v \in V$. Now restrict $\phi$ to $V_\alpha$ and denote by $\phi|_{V_\alpha}$. Note that $\overline{t^sv}=\overline v$ for all $s \in \mathbb Z^n, v \in V.$\\

Claim: $\phi(T_1(u,r,b_1,b_2).v)=\pi(T_1(u,r,b_1,b_2)).\overline v,$ for all $v \in V_\alpha, u\in \mathbb C^n, r\in \mathbb Z^n, b_1,b_2 \in B$.\\
Now,
$\phi(T_1(u,r,b_1,b_2).v)$\\

 $ =\overline{ t^{-r}b_1D(u,r)b_2.v -D(u,0)b_1b_2.v}$ \\
 
 $=\overline{\psi(b_1)t^{-r}D(u,r)b_2.v} -\overline{D(u,0)b_1b_2.v}$ \\
 
 $=\overline{\psi(b_1)D(u,r)b_2.v} -\overline{D(u,0)b_1b_2.v}$ (Since $\overline{t^sv}=\overline v$)\\
 
 $=\overline{\psi(b_1)D(u,r)b_2.v-D(u,0)b_1b_2.v}$\\
 
 $=\pi(T_1(u,r,b_1,b_2)).\overline v$.\\
 
Thus $\phi|_{V_\alpha}$ is a $D_1$-module map.
Now $\phi|_{V_\alpha}$ is non-zero and $V_\alpha$ is an irreducible $D_1$-module, so $\phi|_{V_\alpha}$ is injective. $\phi|_{V_\alpha}$ is surjective because of equal dimension and hence $V_\alpha \simeq V/W$ as $D_1$-module.
\end{proof}

Let us consider the space $$\widetilde D=span\{\psi(b_1)D(u,r)b_2-D(u,r)b_1b_2:r \in \mathbb {Z}^n, u \in \mathbb{C}^n, b_1,b_2 \in B\}.$$ Observe that $\widetilde D \subset D_2$ and $\widetilde D=DerA \otimes M$, where $M$ is the maximal ideal $ker\psi$. Therefore $\widetilde D$ is an ideal in $DerA \otimes B$ and hence an ideal in $D_2$. 

\begin{lemma}\label{l2.3}

 $D(u,0)b$ acts as scalar on $V_\alpha$ for all $u \in \mathbb C^n, b \in B.$
\end{lemma}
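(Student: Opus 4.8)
The plan is to reduce everything to the abelian part of $DerA\otimes M$ and then exploit the simplicity of $DerA$ together with the reductive structure of $\phi(D)$ from Lemma \ref{l2.2}. First I would use that $\psi\colon B\to\mathbb C$ is an algebra homomorphism with $M=\ker\psi$, so $B=\mathbb C\,1\oplus M$; writing $b=\psi(b)1+m_b$ with $m_b\in M$, and noting that $D(u,0)\otimes 1=D(u,0)$ acts on $V_\alpha$ by the scalar $(u,\alpha)$ by the very definition of $V_\alpha$, it suffices to show that $D(u,0)\otimes m$ acts by a scalar for every $m\in M$.

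The key structural observation I would establish next is that, for $m\in M$, the operator $D(u,s)\otimes m$ commutes on $V$ with every shift $t^r=t^r\otimes 1$: indeed $[D(u,s)\otimes m,\,t^r\otimes 1]=(u,r)\,t^{s+r}\otimes m$ acts as $(u,r)\psi(m)t^{s+r}=0$ since $\psi(m)=0$. Identifying $V=V_\alpha\otimes A$ with $t^r$ acting by multiplication on the $A$-factor, this says $DerA\otimes M$ acts by $A$-linear operators, with $D(u,s)\otimes m$ homogeneous of degree $s$; writing its restriction as $\theta(u,s,m)\in\mathfrak{gl}(V_\alpha)$, one has $\theta(u,s,m)=\phi(T(u,s,1,m))|_{V_\alpha}$ because $t^{-s}D(u,s)m=T(u,s,1,m)$. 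Since the $t^r$ are central among $A$-linear maps, the bracket of $\tau$ collapses to $[\theta(u,s,m),\theta(v,s',m')]=\theta(w,s+s',mm')$ with $w=(u,s')v-(v,s)u$; that is, $\theta\colon DerA\otimes M\to\mathfrak{gl}(V_\alpha)$ is a Lie algebra homomorphism whose image $\mathcal L=\theta(DerA\otimes M)$ lies in $\phi(D)$. Because $\theta(u,0,m)=D(u,0)m|_{V_\alpha}$, the whole statement reduces to showing that $\mathcal L$ acts by scalars.

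I would then check, using the bracket formula of Lemma \ref{l2.1} and the identity $T(v,s',b_3,b_4).w=\psi(b_3)\,t^{-s'}(D(v,s')b_4.w)$ on $V_\alpha$, that $\mathcal L$ is an $\emph{ideal}$ of $\phi(D)$: in $[\phi(T(v,s',b_3,b_4)),\mathcal L]$ the terms carrying $M$ in the first algebra slot vanish (as $\psi$ of an element of $M$ is $0$), while those carrying $M$ in the second slot again lie in $\mathcal L$. By Lemma \ref{l2.2}, $\phi(D)$ is reductive, so the ideal $\mathcal L$ is reductive and is central precisely when it is solvable (equivalently when $[\mathcal L,\mathcal L]=0$). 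Granting that $\mathcal L$ is central, irreducibility of $V_\alpha$ over $\phi(D)$ forces every element of $\mathcal L$ — in particular each $D(u,0)m$ — to act by a scalar, which together with the first paragraph proves the lemma.

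The hard part is therefore to show that $\mathcal L$ is solvable, i.e.\ that $DerA\otimes M$ admits no nonzero finite\nobreakdash-dimensional semisimple quotient. This rests on the fact that $DerA$ is an infinite\nobreakdash-dimensional simple Lie algebra, hence has no nontrivial finite\nobreakdash-dimensional representation; the genuine nuisance is that $M$ is only a non\nobreakdash-unital commutative algebra and the bracket $[X\otimes m,Y\otimes m']=[X,Y]\otimes mm'$ keeps raising the power of $M$, so $\theta$ cannot simply be restricted to a copy of $DerA$ unless $M$ contains an idempotent. I would resolve this by reducing to the case in which $M$ is nilpotent — filtering $B$ by the powers of $M$ and using that $B$ is Noetherian (Krull intersection) so that the action factors through $DerA\otimes(M/M^N)$. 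In that case $DerA\otimes M$ is solvable, its derived series being $DerA\otimes M^{2^k}\to 0$, so $\widetilde D=DerA\otimes M$ is a solvable ideal of $D_2$ acting on the irreducible module $V_\alpha$ and must act through a character; in particular every $D(u,0)m$ acts by a scalar, and $\mathcal L$ is solvable as required. Making the reduction to nilpotent $M$ fully rigorous is the step I expect to be the main obstacle.
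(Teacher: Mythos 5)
Your setup is sound and, modulo notation, is the same as the paper's: your map $\theta$ is (under the identification $V_\alpha\simeq V/W$ of Proposition \ref{p2.1}(6)) exactly the paper's representation $\rho=\phi|_{\widetilde D}$ of $\widetilde D=DerA\otimes M$; your observation that $D(u,s)\otimes m$ commutes with the $t^r$ is the same use of associativity as in Proposition \ref{p2.1}(5); and your final mechanism (image of $DerA\otimes M$ is a solvable ideal of a reductive algebra acting irreducibly, hence central, hence scalar) is precisely how the paper concludes, via Lemma \ref{l2.2}. The genuine gap is the step you yourself flag: the claim that ``the action factors through $DerA\otimes(M/M^N)$'' cannot be extracted from Noetherianity of $B$ and the Krull intersection theorem. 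Krull's theorem is a statement about the commutative algebra $B$ alone (that $\bigcap_n M^n$ is small); it gives no information about the Lie-algebra ideal $J=\ker\theta$. What you actually need is that a finite-codimension ideal of the Lie algebra $DerA\otimes M$ must contain $DerA\otimes M^N$ for some $N$, i.e.\ must be ``$M$-adically open,'' and this is not formal: ideals of $DerA\otimes M$ in general contain no power $DerA\otimes M^N$ at all (the kernel of evaluation at a second point is an example), so the finite-dimensionality of $V_\alpha$ has to be exploited in an essential, non-soft way. Since everything else in your outline is soft, the entire content of the lemma is concentrated in the step you left open, so as it stands the proposal does not prove the statement.

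This missing step is exactly where the paper does its real work. When $M$ is nilpotent (the paper's Case 1) there is nothing to do, as you say. Otherwise (Case 2) the paper sets $N=\dim(DerA\otimes M/J)$, finite because $V/W$ is finite dimensional, and proves the Claim that $DerA\otimes M^{2N+6}$ annihilates $V/W$ by a pigeonhole-plus-bracketing argument: choose $N+1$ elements $D(u_i,r_i)b_1+J$ with distinct nonzero $r_i$, extract from their linear dependence a nonzero $X=\sum_{i=1}^{k}D(u_i,r_i)b_1\in J$, and then bracket $X$ repeatedly with carefully chosen elements $D(v,-r_1)b_2$, $D(v',0)b_3,\dots$ so as to strip away summands one at a time, eventually landing $D(u,0)b_1b_2\cdots b_{2k+1}$, and from it all of $D(v,0)\otimes M^{2N+5}$ and finally $DerA\otimes M^{2N+6}$, inside $J$ (Sub-claim 1 for $n\geq 2$; the case $n=1$ needs a separate computation with the operators $d_{r_i}$, Sub-claim 2). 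Only after this does the soft argument you describe take over: $[DerA\otimes M^i,DerA\otimes M^j]=DerA\otimes M^{i+j}$ by simplicity of $DerA$, so modulo the kernel $DerA\otimes M$ becomes solvable, hence its image is a solvable ideal of the reductive algebra $\phi(D_2)$ and acts by scalars. To complete your proof you would need to supply an argument of this combinatorial kind (or a genuinely new one) for the containment $DerA\otimes M^N\subseteq\ker\theta$.
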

\begin{proof}

By Proposition \ref{p2.1}, $D_2$ acts on $V/W$ irreducibly. Let $\phi: D_2\to End(V/W)$ be the finite dimensional  irreducible representation and $\rho=\phi|_{\widetilde D}.$ We prove this lemma in two cases. \\
 Case 1: Let $M^k=0$ for some $k \in \mathbb N$.\\
We have $[DerA\otimes M^i, DerA\otimes M^j]=DerA\otimes M^{i+j}$ for all $i,j \in \mathbb N$, since $DerA$ is a simple Lie algebra. This implies that $DerA\otimes M$ is a solvable ideal in $D_2$, thus $\phi(\widetilde D)$ is a solvable ideal in the Lie algebra $\phi(D_2)$. By Lemma \ref{l2.2}, $\phi(D_2)$ is a reductive Lie algebra and hence $\phi(\widetilde D)$ acts as a scalar on $V/W$. In particular, $\psi(b)D(u,0)-D(u,0)b$ acts as scalar on $V/W.$ Now by Proposition \ref{p2.1}(6,3), we have $T_1(u,0,b,1)$ acts as scalar on $V_\alpha$. This gives us $D(u,0)b$ acts as scalar on $V_\alpha$.\\
 
Case 2: Let $M^k \neq 0$ for all $k \in \mathbb N.$ In this case, it is sufficient to show that $DerA\otimes M^k
 $ acts trivially on $V/W$ for some $k \in \mathbb N$. Because $[ \frac{ DerA\otimes M^i}{ker \phi}, \frac{ DerA\otimes M^j}{ker \phi}]= \frac{ DerA\otimes M^{i+j}}{ker \phi}$ for all $i,j \in \mathbb N$ implies that  $\frac{ DerA\otimes M^2}{ker \phi}$ is a solvable ideal contained in the semi-simple Lie algebra $[\frac{D_2}{ker \phi}, \frac{D_2}{ker \phi}].$ Thus $DerA \otimes M^2$ acts trivially on $V/W.$ Therefore
 $\phi(DerA\otimes M)$ is an abelian ideal in $\phi(D_2)$. Now the result follows by same argument as case 1.\\
 
\textbf{Claim:} $DerA\otimes M^{2N + 6} .V/W=0$, where $N=$ dim $(DerA\otimes M/J)$, $J=ker \rho$. \\

Sub-claim 1: $D(v,0)\otimes M^{2N+5}\subset J$ for all $v \in \mathbb{C}^n,$ for all $n \geq 2.$\\

 Let $b=b_1b_2..,b_{2N+5}$ be a non-zero element of $M^{2N + 5}$. Consider the set of vectors $$\{ D(u_i,r_i)b_1 +J:u_i\in \mathbb{C}^n- \{0\}, r_i \in \mathbb{Z}^n -\{0\}, r_i\neq r_j , \forall i\neq j, 1 \leq i\leq N+1\}.$$ Since  dim $(DerA\otimes M/ker \rho)=N,$ there exists a non-zero vector $X=\displaystyle{\sum_{i=1}^{k}D(u_i,r_i)b_1}\in J$ for some $k \leq N+1.$\\ Now,
 $$[X,D(v,-r_1)b_2]=D(w_1,0)b_1b_2 +\displaystyle{\sum_{i=2}^{k}}D(w_i,r_i-r_1)b_1b_2 ,$$ where $w_1=-(u_1,r_1)v-(v,r_1)u_1$ and $w_i=-(u_i,r_1)v-(v,r_i)u_i$ for all $i\neq 1.$\\

 Thus if $(u_1,r_1)=0$ choose $v$ such that $(v,r_1)\neq 0$ and if $(u_1,r_1) \neq 0$ choose $v$ such that $(v,r_1)= 0$, in both cases $w_1\neq 0.$ If all $w_i=0$ for all $i\neq 1,$ then we have $D(w_1,0)b_1b_2 \in J.$
If $w_i\neq 0$ for some $i\neq 1$, say  $w_j\neq 0$. Then choosing any $v'$ such that $(v',r_j-r_1)\neq 0$ and considering $[[X,D(v,-r_1)b_2],D(v',0)b_3]$ we get the sum of at most $k-1$ terms. Continuing the above process we can conclude that $D(u,0)b_1b_2..b_{2k+1} \in J $ for some non-zero $u \in \mathbb{C}^n. $\\

For any non-zero $v \in \mathbb{C}^n$ choose  $s \in \mathbb{Z}^n$ such that $(u,s)\neq 0$. Then we have

 $$[D(v,s)b_{2k+2},D(u,0)b_1b_2..b_{2k+1}]= (u,s)D(v,s)b_1b_2...b_{2k+2} \in J.$$  
Again, $$[D(w,-s)b_{2k+3},D(v,s)b_1b_2...b_{2k+2}]=D((w,s)v+(v,s)w,0)b_1...b_{2k+3}\in J .$$
Now, if $(v,s)=0$ choose $(w,s)\neq 0$ and if $(v,s)\neq 0$ choose $w =v $, then with this choice we have for all $v(\neq 0) \in \mathbb{C}^n$, $D(v,0)b_1b_2....b_{2k+3} \in J$. Since $k \leq N+1$ we have $D(v,0)\otimes M^{2N+5}\subset J$ for all $v \in \mathbb{C}^n.$\\
 
Sub-claim 2: $d_0\otimes M^{2N+5} \subset J$ for $n=1.$\\

Let $b=b_1b_2...b_{2N+5} \in M^{2N+5}$. Let us consider the set of vectors\\

 $\{ d_{r_i}b_1 +J : r_i \in \mathbb N, r_i>r_j $ for $i>j$  and $ 2r_i \neq r_j +r_k$ for distinct $i,j, k , 1 \leq i \leq N+1\}$\\ 
 
 ( In particular $2^i , i \in \mathbb N$ can be chosen as $r_i)$. Since dim$(DerA \otimes M)/ J=N$, there exists a non-zero vector $X= \displaystyle{\sum_{i=1}^{k}\lambda _id_{r_i}b_1}\in J$ with some  $\lambda_i( \neq 0) \in \mathbb C$, $k \leq 
N+1$. Let us assume $\lambda_1 \neq 0$ and  consider

 $$[d_{-r_1}b_2,X]=2r_1\lambda_1d_0b_1b_2+\displaystyle{\sum_{i=2}^{k}\lambda _i(r_i+r_1)d_{r_i-r_1}b_1b_2}\in J.$$ 
 
If $\lambda_i =0 $ for all $i\geq 2$ we are done. If not, let $\lambda_2 \neq 0$ and consider 

$$[d_{r_1-r_2}b_4,[d_0b_3,[d_{-r_1}b_2,X]]]=2(r_2-r_1)^2(r_2+r_1)\lambda_2d_0b_1b_2b_3b_4$$  $$ + \displaystyle{\sum_{i=3}^{k}\lambda _i(r_i^2-r_1^2)(r_i+r_2-2r_1)d_{r_i-r_2}b_1b_2b_3b_4}\in J.$$ Continuing this process we can conclude that $d_0 \otimes M^{2N+5} \subset J$, since $k \leq N+1.$
Therefore sub-claim 1 is true for all $n \geq 1.$\\

 Now for any $b \in M$, $b'\in M^{2N+5}$, $u (\neq 0) \in \mathbb{C}^n $ and $s (\neq 0) \in \mathbb{Z}^n$, choose $v $ such that $(v,s)\neq 0$ and then consider $[D(u,s)b,D(v,0)b'] $ which gives $D(u,s)bb'\in J. $ Hence the claim follows.
\end{proof}

Let $T(u,r)b=T(u,r,1,b)$ for all $u,r,b$ and consider $T_B$ be the space spanned by $T(u,r)b.$ It is easy to see that $T_B$ is a sub-algebra of $D_1.$ In fact one can check that 
$$[T(v,s)b,T(u,r)b'] = T(w,r+s)bb' +(u,s)T(v,s)bb'-(v,r)T(u,r)bb', $$
where $w=(v,r)u-(u,s)v.$
\begin{theorem}\label{t2.2}
Let $V$ be an irreducible weight module for $\tau$ with finite dimensional weight spaces. Also let the action of $A\otimes B$ on $V$ be associative and $1.v=v$ for all $v \in V.$ Then $V$ is an irreducible module for $A\rtimes DerA.$
\end{theorem}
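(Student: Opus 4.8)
I want to show that an irreducible weight module $V$ for $\tau=(A\rtimes DerA)\otimes B$ (with the stated associativity and unital hypotheses) is actually an irreducible module for $A\rtimes DerA$, so that Theorem~\ref{t2.1} applies. The decisive structural fact already assembled in the excerpt is Lemma~\ref{l2.3}: $D(u,0)b$ acts as a scalar on $V_\alpha$ for every $u\in\mathbb C^n$ and $b\in B$. The plan is to bootstrap this ``degree-zero scalarity'' into a statement that the whole loop variable $B$ collapses to a single evaluation, i.e.\ that there is an algebra homomorphism $\eta:B\to\mathbb C$ (with $\eta(1)=1$) through which the entire $\tau$-action factors.

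**Main steps.** First I would exploit Lemma~\ref{l2.3} to produce the evaluation map. Since $D(u,0)b$ is a scalar on $V_\alpha$ for all $u$ and $b$, I can define $\eta(b)$ by the relation $D(u,0)b\cdot v=\eta(b)\,(u,\alpha)\,v$ on $V_\alpha$ (choosing $u$ with $(u,\alpha)\neq0$, or handling $\alpha=0$ by a shift), and check using the bracket in Lemma~\ref{l2.1}/Proposition~\ref{p2.1} that $\eta$ is multiplicative and sends $1\mapsto1$. The second, and I expect hardest, step is to transport this single-point behaviour from the Cartan-like piece to the full positive-degree operators. Concretely I would study the subalgebra $T_B$ and the relations just displayed before the theorem, namely
\begin{align}
[T(v,s)b,T(u,r)b']=T(w,r+s)bb'+(u,s)T(v,s)bb'-(v,r)T(u,r)bb',\nonumber
\end{align}
together with $D(u,0)b$ scalar, to force $T(u,r)b$ to act as $\eta(b)$ times $T(u,r)1$ on $V_\alpha$. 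The mechanism is that a commutator of two degree-$\pm r$ elements lands in degree zero where scalarity is known; choosing $s=-r$ and reading off the degree-zero component pins down $T(u,r)(bb')-\eta(b)T(u,r)b'$ as acting trivially, and an induction on the ideal $M=\ker\eta$ (paralleling the two-case solvable/semisimple dichotomy of Lemma~\ref{l2.3}) removes the $M$-part entirely.

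**Passing back to $A\rtimes DerA$.** Once $T(u,r)b$ acts as $\eta(b)\,T(u,r)1$ on $V_\alpha$, and using $t^s b\cdot v=\psi(b)t^s v$ (the associativity identity from Proposition~\ref{p2.1}(5)) together with $t^k V_\alpha=V_{\alpha+k}$, I would define an $A\rtimes DerA$-action on $V$ by $D(u,r)\cdot w:=D(u,r)1\cdot w$ and $t^r\cdot w:=t^r1\cdot w$, i.e.\ evaluate every $b$ at $\eta$ (and $\psi=\eta$, which falls out by comparing $1\otimes b$ on $V_\alpha$ with $\eta$). Because $V=\bigoplus_m V_{\alpha+m}$ with each $V_{\alpha+m}=t^mV_\alpha$ and the operators $T(u,r)b$, $t^r b$ generate the whole $U/L$-action, the already-verified identity on $V_\alpha$ propagates to all weight spaces, so the $\tau$-action coincides with the pullback of an $A\rtimes DerA$-action along $b\mapsto\eta(b)$. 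Irreducibility of $V$ as a $\tau$-module then immediately gives irreducibility as an $A\rtimes DerA$-module, since the two module structures have the same invariant subspaces.

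**The obstacle.** The genuinely delicate point is the inductive removal of the $\ker\eta$-contribution in positive degree: Lemma~\ref{l2.3} only hands me scalarity in degree zero, and I must leverage the bracket relations to show that the positive-degree generators cannot ``see'' more of $B$ than the quotient $B/\ker\eta\cong\mathbb C$. I expect this to require the same finite-dimensionality/reductivity input (Lemma~\ref{l2.2}) applied to the image of $T_B$ in $\mathrm{End}(V_\alpha)$, combined with a careful choice of weights $r,s$ so that nested commutators both survive (nonzero structure constants $(u,s),(v,r)$) and eventually land in the controlled degree-zero subalgebra, exactly as in the two sub-claims of Lemma~\ref{l2.3}.
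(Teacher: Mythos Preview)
Your proposal starts from the right place (Lemma~\ref{l2.3}) and has the correct endgame in mind, but it misses the one-line shortcut that makes the paper's proof immediate, and in its place substitutes a considerably heavier programme whose details are not actually carried out.

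The key identity you overlook is the bracket
\[
[D(v,0)b,\,D(u,r)] \;=\; (v,r)\,D(u,r)b,
\]
valid in $\tau$ for all $u,v,r,b$. The point is that the \emph{left-hand side} involves only $D(v,0)b$ (whose action on every weight space is completely determined by Lemma~\ref{l2.3} together with the weight grading, namely $D(v,0)b\cdot(v_\alpha\otimes t^m)=(\lambda(v,b)+(v,m)\psi(b))\,v_\alpha\otimes t^m$) and $D(u,r)$ with $b=1$ (which we want to reduce to anyway). Choosing $v$ with $(v,r)\neq 0$, this single commutator expresses $D(u,r)b$ entirely in terms of operators already under control. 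A short computation then yields $D(u,r)b=\psi(b)D(u,r)$ on $V$ for all $r\neq 0$; taking $u=v$, $s=-r$ in $[D(v,s)b,D(u,r)]$ closes the $r=0$ case as well and gives $\lambda(u,b)=\psi(b)(u,\alpha)$. No induction on $M=\ker\psi$, no reductivity argument on $T_B$, no nested commutators are needed.

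By contrast, your plan attacks the problem through the $T_B$-relations with $s=-r$ and an induction on powers of $M$ ``paralleling the two-case dichotomy of Lemma~\ref{l2.3}.'' You yourself flag this as the obstacle, and indeed the sketch does not explain why bracketing two positive-degree $T$-operators down to degree zero forces $T(u,r)b-\psi(b)T(u,r)$ to vanish rather than merely to differ by a scalar; nor is it clear how the semisimple/solvable split reruns here. A second issue is your definition of $\eta$: writing the scalar of $D(u,0)b$ as $\eta(b)(u,\alpha)$ presupposes a product form that is \emph{a consequence} of the argument, not an input (a priori the scalar is some $\lambda(u,b)$ linear in $u$, i.e.\ $(u,\beta(b))$ for some $\beta(b)\in\mathbb C^n$, and nothing yet says $\beta(b)\in\mathbb C\alpha$). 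There is in fact no need to introduce $\eta$ at all: the homomorphism $\psi:B\to\mathbb C$ coming from the central element $1\otimes b$ is already on the table, and the proof shows the entire $\tau$-action factors through $\psi$.
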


\begin{proof}
By Lemma \ref{l2.3}, $D(u,0)b $ acts on $V_\alpha$ as a scalar, say $\lambda(u,b)$ for all $u,b$. Then we have,
\begin{align}\label{a2.9}
D(u,0)b(v_\alpha\otimes t^m)=(\lambda(u,b)+(u,m)\psi(b))v_\alpha \otimes t^m\\
 t^mb.(v_\alpha \otimes t^k)=\psi(b)v_\alpha \otimes t^{m+k}=\psi(b)t^m.(v_\alpha \otimes t^k).
\end{align} 
for all $m, k \in \mathbb Z^n$ and $v_\alpha \in V_\alpha.$\\  Consider the actions of $T(u,r)b$ on $V$, for all $r \neq 0, u ,b.$ 
 $$T(u,r)b.v\otimes t^k= (t^{-r}D(u,r)b-D(u,0)b)v\otimes t^k.$$ Now using (\ref{a2.9}) and the fact that $t^r$ commutes with $T(u,r)b$ we have,
 \begin{align}\label{a2.11}
 D(u,r)b.v\otimes t^k=T(u,r)b.v\otimes t^{k+r}+(\lambda(u,b)+(u,k)\psi(b))v\otimes t^{k+r}
\end{align}  
for all $v \in V_\alpha$ and $k \in \mathbb Z^n$. \\
 In particular for $b=1$, (\ref{a2.11}) gives
 \begin{align}\label{a2.12}
 D(u,r).v\otimes t^k=T(u,r).v\otimes t^{k+r}+(u,k+ \alpha)\psi(b)v\otimes t^{k+r}, 
 \end{align}
since $\lambda(u,1)=(u,\alpha)$.\\
For any $r\neq 0$ choose $v$ such that $(v,r)\neq 0$ and consider 
\begin{align}\label{a2.13}
[D(v,0)b, D(u,r)]=(v,r)D(u,r)b.
\end{align} 
Then using the actions of (\ref{a2.9}), (\ref{a2.11}) and (\ref{a2.12}) on the both sides of (\ref{a2.13}) we have the following action of $T(u,r)b $ on $V$, 
\begin{align}\label{a*}
T(u,r)b= \psi(b)T(u,r)+ \psi(b)(u,\alpha)-\lambda(u,b)
\end{align} 
for all $r \neq 0.$\\ 
Now using (\ref{a2.11}), (\ref{a2.12}) and (\ref{a*}) it is immediate to check that
\begin{align}\label{a2.15}
D(u,r)b=\psi(b)D(u,r)    
\end{align} 
on $V$ for all $r \neq 0.$\\
For all non-zero $u,v,r,s $ and $b$ we have
\begin{align}\label{a2.16}
[D(v,s)b,D(u,r)]w_0\otimes t^k=D((v,r)u-(u,s)v,r+s)bw_0\otimes t^k,
\end{align}
for all $w_0 \in V_\alpha$ and $k \in \mathbb Z^n$.\\
Using (\ref{a2.15}) in the left hand side of (\ref{a2.16}) we have
\begin{align}\label{a2.17}
\psi(b)D((v,r)u-(u,s)v,r+s)w_0\otimes t^k=D((v,r)u-(u,s)v,r+s)b.w_0\otimes t^k.
\end{align} 
In (\ref{a2.17}), put $u=v $ and find $r$ such that $(u,r)\neq 0$. Then put $s=-r$ and conclude that $D(u,0)b=\psi(b)D(u,0)$ on $V$. Hence $\lambda(u,b)=\psi(b)(u,\alpha)$ and $D(u,r)b=\psi(b)D(u,r) $ on $V$ for all $u,r$. This completes the proof.

\end{proof}

\begin{remark}
Theorem \ref{t2.2} shows that our module for $\tau$ is actually an evaluation module at a single point. For more details on evaluation module see \cite{2,5,19} and references therein.
\end{remark}

\section{}
In \cite{1}, Theorem \ref{t2.1} was proved for $A=\mathbb C[t_1^{\pm 1},t_2^{\pm 1},....,t_n^{\pm 1}] $ with the assumption $n \geq 2$. In this section we show that the Theorem \ref{t2.1} holds for  $A=\mathbb C[t^{\pm 1}]$ too.\\

Let $V$ be a module for $A \rtimes DerA$ satisfying all properties of Theorem \ref{t2.1}. Let us assume that there exists $\lambda \in H^*$ such that $V_\lambda \neq 0$. Then $V=\displaystyle{\bigoplus_{m \in \mathbb{Z}}}V_{m+\alpha}$,\\ where $V_{m+\alpha}= \{ v \in V: d_0v=(m+\alpha)v \}$, $\alpha=\lambda(d)\in \mathbb{C}.$ Note that $V_\lambda= V_\alpha $ and $t^k$ is injective, hence $t^k V_\alpha=V_{k}$ for all $k \in \mathbb{Z}$. Therefore we can identify $V$ with $V_\alpha \otimes A $ as vector space.
\\ 

Let $I(r)=(t^r-1)d$, for all $r \in \mathbb Z$ and denote $T'=span\{I(r): r \in \mathbb Z\}$. Let $\textbf{ m}$ be the maximal ideal in $A$ generated by $(t-1)$. It is clear that $\textbf{m}d=T'$.
\begin{lemma}\label{l2.4}
 $[I(r),I(s)]=(s-r)I(r+s)+rI(r)-sI(s)$, for all $s,r\in \mathbb Z.$ In particular, $T'$ is a sub-algebra of $A\rtimes DerA$.
\end{lemma}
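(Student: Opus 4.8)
The plan is to express $I(r)$ in terms of the standard basis $d_r=t^rd$ of $DerA$ and then reduce everything to the single structural relation $[d_m,d_n]=(n-m)d_{m+n}$ recorded in Section~2. Since $I(r)=(t^r-1)d=d_r-d_0$ (using $d_0=d$), the entire computation takes place inside $DerA$ and requires no input beyond bilinearity of the bracket.

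First I would expand $[I(r),I(s)]=[d_r-d_0,\,d_s-d_0]$ by bilinearity into the four brackets $[d_r,d_s]$, $-[d_r,d_0]$, $-[d_0,d_s]$ and $[d_0,d_0]$. Applying $[d_m,d_n]=(n-m)d_{m+n}$ to each gives $[d_r,d_s]=(s-r)d_{r+s}$, $[d_r,d_0]=-r\,d_r$, $[d_0,d_s]=s\,d_s$ and $[d_0,d_0]=0$, so that $[I(r),I(s)]=(s-r)d_{r+s}+r\,d_r-s\,d_s$.

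The final step is to rewrite this in terms of the generators $I(k)=d_k-d_0$, i.e.\ to substitute $d_k=I(k)+d_0$ for $k=r+s,\,r,\,s$. The terms proportional to $d_0$ then collect with combined coefficient $(s-r)+r-s$, which vanishes identically; this cancellation is the whole content of the statement, and it is precisely what forces $[I(r),I(s)]$ to lie in $T'$ rather than merely in $DerA$. After the cancellation one reads off $[I(r),I(s)]=(s-r)I(r+s)+r\,I(r)-s\,I(s)$, and closure of $T'$ under the bracket (equivalently, that $T'=\mathbf{m}d$ is a subalgebra) is immediate. I do not expect any genuine obstacle: the only point deserving a moment's attention is confirming that the $d_0$-coefficient is zero, since this is the one place where the identity could fail and it is exactly what distinguishes $T'$ as an honest Lie subalgebra.
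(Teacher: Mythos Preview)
Your proof is correct; the paper states this lemma without proof, and your direct computation via $I(r)=d_r-d_0$ and the relation $[d_m,d_n]=(n-m)d_{m+n}$ is exactly the intended routine verification. The cancellation of the $d_0$-coefficient that you highlight is indeed the only point of substance.
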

Let  $W=span\{t^r.v-v: v \in V_\alpha$ and $r \in \mathbb{Z}\}$.
 It is easy to see that $W$ is a $T'$ sub-module of $V$ and hence $\overline V= V/W$ is a $T'$ module. Now we define an action of $A\rtimes DerA$ on $\overline V \otimes A$ by
\begin{align}\label{a2.18}
d_r.v\otimes t^s=(I(r).v)\otimes t^{r+s}+(s+\alpha)v\otimes t^{r+s},\\
t^r.v\otimes t^s= v\otimes t^{r+s},
\end{align} 
for all $r, s \in \mathbb{Z}$, $v \in \overline V$ and for some $\alpha \in \mathbb{C}$. It is easy to see that this action define a module structure on $\overline{V}\otimes A.$ We will denote this module by $\overline V^\alpha \otimes A$.
\begin{proposition}\label{p2.2}
$V \simeq \overline V ^\alpha \otimes A$ as $A \rtimes DerA$-module and $\overline V ^\alpha$ is a finite dimensional irreducible $T'$-module.
\end{proposition}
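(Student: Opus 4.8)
The plan is to reduce both assertions to the single linear isomorphism $V_\alpha\cong V/W$ together with the irreducibility of $V$ as an $A\rtimes DerA$-module.

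First I would show that $\Phi\colon V_\alpha\to\overline V=V/W$, $w\mapsto\overline w$, is a linear isomorphism. Surjectivity is immediate: any $v\in V$ decomposes as $v=\sum_k v_k$ with $v_k\in V_{k+\alpha}=t^k V_\alpha$, say $v_k=t^k w_k$ with $w_k\in V_\alpha$; since $t^k w_k-w_k\in W$ we get $\overline{v}=\sum_k\overline{w_k}=\Phi\bigl(\sum_k w_k\bigr)$. For injectivity I would first note that $W=\mathrm{span}\{t^r v-v:v\in V\}$ (if $v\in V_{m+\alpha}$ and $v=t^m w$, then $t^r v-v=(t^{r+m}w-w)-(t^m w-w)\in W$), and then, writing a general element of $W$ as $x=\sum_j(t^{r_j}v_j-v_j)$ with $v_j\in V_\alpha$ and comparing weight components, the injectivity of each $t^k$ forces $V_\alpha\cap W=0$. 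Hence $\dim\overline V=\dim V_\alpha<\infty$, so $\overline V^\alpha$ is finite dimensional; write $\sigma=\Phi^{-1}$.

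Next I would build the module isomorphism $\Psi\colon\overline V^\alpha\otimes A\to V$, $\overline w\otimes t^s\mapsto t^s\sigma(\overline w)$. As $t^s\colon V_\alpha\to V_{s+\alpha}$ is bijective and $V=\bigoplus_s V_{s+\alpha}$, $\Psi$ is a linear bijection, and the $t^r$-action is intertwined trivially. The real step is the $d_r$-action. Here the induced $T'$-action must be unwound carefully: $I(r)=d_r-d_0$ acts on $\overline V$ by $I(r)\overline w=\overline{d_r w}-\alpha\overline w$, and writing $d_r w=t^r w''$ with $w''\in V_\alpha$ gives $\overline{d_r w}=\overline{w''}$, whence $d_r w=t^r\sigma\bigl(I(r)\overline w\bigr)+\alpha\,t^r w$. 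Using $[d_r,t^s]=s\,t^{r+s}$ one then obtains $d_r(t^s w)=t^{r+s}\sigma\bigl(I(r)\overline w\bigr)+(s+\alpha)t^{r+s}w$, which is precisely $\Psi$ applied to the right-hand side of the defining relation (\ref{a2.18}). This proves $V\simeq\overline V^\alpha\otimes A$ as $A\rtimes DerA$-modules.

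Finally, irreducibility of $\overline V^\alpha$ as a $T'$-module follows formally from the isomorphism $\Psi$. Indeed $\overline V^\alpha\otimes A$ is irreducible because $V$ is; if $\overline U\subset\overline V$ were a nonzero proper $T'$-submodule, then $\overline U\otimes A$ would be a nonzero proper $A\rtimes DerA$-submodule of $\overline V^\alpha\otimes A$ (it is visibly stable under every $t^r$, and stable under $d_r$ since $I(r)\overline u\in\overline U$), contradicting irreducibility. Hence $\overline V^\alpha$ is an irreducible $T'$-module. I expect the one genuinely delicate point to be the $d_r$-intertwining computation: one must read $I(r)$ as the single derivation $d_r-d_0$ acting by $\rho(d_r)-\rho(d_0)$, and reconcile $d_r w$ with $\sigma(I(r)\overline w)$ through the commutator $[d_r,t^s]=s\,t^{r+s}$; the remaining steps are linear algebra or a formal consequence of the irreducibility of $V$.
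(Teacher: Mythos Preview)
Your proof is correct and follows essentially the same strategy as the paper: both build the obvious map between $V$ and $\overline V^\alpha\otimes A$ (the paper goes $V\to\overline V^\alpha\otimes A$ via $t^m v\mapsto\overline v\otimes t^m$, while you build its inverse $\Psi$) and then deduce irreducibility of $\overline V^\alpha$ from that of $V$ by the $\overline U\otimes A$ argument. The only real difference is that the paper obtains injectivity immediately from ``nonzero module map out of an irreducible module is injective,'' whereas you prove $V_\alpha\cap W=0$ directly and supply the explicit $d_r$-intertwining check that the paper leaves implicit.
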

\begin{proof}
 Define $\phi: V \to \overline V^\alpha \otimes A$ by $\phi(t^mv)=\overline{t^mv}\otimes t^m$ for all $v \in V_\alpha$ and $m \in \mathbb Z$. Then $\phi $ is a $A \rtimes DerA$ module homomorphism. $\phi$ is clearly surjective. Since $\phi(V_\alpha)=\overline V^\alpha$, hence $\phi$ is non-zero. So by irreducibility of $V$ we have $\phi$ is injective. \\
Suppose $\overline W$ be a $T' $ sub-module of $\overline V^\alpha$, then $\overline W \otimes A$ becomes a $A\rtimes DerA$ sub-module of $\overline V^\alpha \otimes A$, a contradiction. 
\end{proof}

\begin{lemma}\label{l2.5}

1. $$\left[ (t-1)^kd_i,(t-1)^ld_j \right]=(l-k+j-i)(t-1)^{k+l}d_{i+j} + (l-k)(t-1)^{k+l-1}d_{i+j},$$
for all $k,l \in \mathbb N, i,j \in \mathbb Z.$\\

2. For any polynomial $f(\lambda)$, $$f((1-t^{-1})d_{0}-k).(t-1)^{k+1}d_{-1}.v=(t-1)^{k+1}d_{-1}.f((1-t^{-1})d_{0}).v,$$ for all $v \in V$ and $k \in \mathbb{N}$.
\end{lemma}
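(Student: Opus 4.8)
The plan is to prove both parts by direct computation inside $DerA$, obtaining part 2 as a formal consequence of part 1.

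For part 1, I would work with the realization of $DerA$ as derivations $P(t)\tfrac{d}{dt}$ of $A$, for which $d_m = t^{m+1}\tfrac{d}{dt}$, $t^a d_m = d_{a+m}$, and $[d_i,d_j]=(j-i)d_{i+j}$. The bracket of two such derivations multiplied by elements of $A$ is governed by the Leibniz rule $[fD,gE] = f\,D(g)\,E - g\,E(f)\,D + fg\,[D,E]$ for $f,g\in A$ and $D,E\in DerA$. Taking $f=(t-1)^k$, $g=(t-1)^l$, $D=d_i$, $E=d_j$, I would compute $d_i((t-1)^l)=l\,t^{i+1}(t-1)^{l-1}$ and $d_j((t-1)^k)=k\,t^{j+1}(t-1)^{k-1}$, which after using $t^a d_m=d_{a+m}$ yields
\[ [(t-1)^k d_i,(t-1)^l d_j] = (l-k)(t-1)^{k+l-1}d_{i+j+1} + (j-i)(t-1)^{k+l}d_{i+j}. \]
The only remaining step is the rewriting $(t-1)^{k+l-1}d_{i+j+1} = (t-1)^{k+l}d_{i+j} + (t-1)^{k+l-1}d_{i+j}$, which comes from $t=(t-1)+1$ together with $t\,d_m=d_{m+1}$. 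Substituting this and collecting the coefficients of $(t-1)^{k+l}d_{i+j}$ and $(t-1)^{k+l-1}d_{i+j}$ reproduces exactly the asserted formula, with the coefficient $(l-k)+(j-i)=l-k+j-i$ on the first term.

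For part 2, I would first observe that $(1-t^{-1})d_0 = (t-1)d_{-1}$, since $1-t^{-1}=t^{-1}(t-1)$ in the commutative algebra $A$ and $t^{-1}d_0=d_{-1}$. Writing $H=(1-t^{-1})d_0$ and $X=(t-1)^{k+1}d_{-1}$, I would apply part 1 with indices $k\mapsto 1$, $l\mapsto k+1$, $i=j=-1$ to obtain
\[ [H,X] = k(t-1)^{k+2}d_{-2} + k(t-1)^{k+1}d_{-2} = k(t-1)^{k+1}\,t\,d_{-2} = k(t-1)^{k+1}d_{-1} = kX. \]
Thus $X$ is an eigenvector of $\mathrm{ad}(H)$ with eigenvalue $k$, so the operators induced by $H$ and $X$ on $V$ satisfy $HX = X(H+k)$.

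From $HX=X(H+k)$ an immediate induction gives $H^nX = X(H+k)^n$ for all $n\geq 0$, hence $g(H)X = X\,g(H+k)$ for every polynomial $g$. Applying this with $g(\lambda)=f(\lambda-k)$ gives $f(H-k)X = X f(H)$, and evaluating both sides at an arbitrary $v\in V$ is precisely the claimed identity. I expect the only genuinely error-prone point to be the bookkeeping of the index shifts in part 1, in particular the passage from $d_{i+j+1}$ to $d_{i+j}$ via $t\,d_m=d_{m+1}$; once part 1 is established, part 2 is a short formal deduction.
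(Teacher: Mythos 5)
Your proof is correct. For part 2 it is essentially the paper's own argument: the paper likewise identifies $(1-t^{-1})d_0$ with $(t-1)d_{-1}$, computes the same bracket $[(t-1)d_{-1},(t-1)^{k+1}d_{-1}]=k(t-1)^{k+2}d_{-2}+k(t-1)^{k+1}d_{-2}=k(t-1)^{k+1}d_{-1}$ via part 1, and then runs the same induction on powers of $(1-t^{-1})d_0-k$, only written at the level of vectors rather than packaged as your operator conjugation identity $g(H)X=Xg(H+k)$; the two inductions are identical in content. The one place where you genuinely add something is part 1: the paper gives no argument there, citing Lemma 2.4 of \cite{4} instead, whereas you derive the formula from the Leibniz rule $[fD,gE]=fD(g)E-gE(f)D+fg[D,E]$ together with $t^a d_m=d_{a+m}$ and the rewriting $d_{i+j+1}=\bigl((t-1)+1\bigr)d_{i+j}$; that computation is correct and makes the lemma self-contained.
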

\begin{proof}
1. Follows from Lemma 2.4 of \cite{4}.\\

2. Note that, $((1-t^{-1})d_{0}-k).(t-1)^{k+1}d_{-1}.v$\\

$= \left[(1-t^{-1})d_{0},(t-1)^{k+1}d_{-1}\right].v+(t-1)^{k+1}d_{-1}.(1-t^{-1})d_{0}.v-k(t-1)^{k+1}d_{-1}.v$\\

$=\left[(t-1)d_{-1},(t-1)^{k+1}d_{-1}\right].v+(t-1)^{k+1}d_{-1}.(1-t^{-1})d_{0}.v-k(t-1)^{k+1}d_{-1}.v$\\

$=k(t-1)^{k+2}d_{-2}.v+k(t-1)^{k+1}d_{-2}.v + (t-1)^{k+1}d_{-1}.(1-t^{-1})d_{0}.v-k(t-1)^{k+1}d_{-1}.v$\\

$=k(t-1)^{k+1}[(t-1)t^{-1}+t^{-1}]d_{-1}.v +(t-1)^{k+1}d_{-1}.(1-t^{-1})d_{0}.v-k(t-1)^{k+1}d_{-1}.v$\\

$=(t-1)^{k+1}d_{-1}.(1-t^{-1})d_{0}.v$\\

Now assume that, 
\begin{align}\label{a2.20}
((1-t^{-1})d_{0}-k)^n.(t-1)^{k+1}d_{-1}.v=(t-1)^{k+1}d_{-1}.((1-t^{-1})d_{0})^n.v
,\end{align}
for some $n \in \mathbb N.$
 Then we have,\\

$((1-t^{-1})d_{0}-k)^{n+1}.(t-1)^{k+1}d_{-1}.v$\\ 

$=((1-t^{-1})d_{0}-k).[(t-1)^{k+1}d_{-1}.((1-t^{-1})d_{0})^n.v], $ by (\ref{a2.20})\\

$=(t-1)^{k+1}d_{-1}.(1-t^{-1})d_{0}.((1-t^{-1})d_{0})^n.v$\\

$=(t-1)^{k+1}d_{-1}.((1-t^{-1})d_{0})^{n+1}.v$\\

 Hence by induction we have the result for any polynomial $f(\lambda)$.
\end{proof}
\begin{proposition}\label{p2.3}

1.  $\textbf{m}^2d\overline V^\alpha=0$.\\
2. $V$ is an irreducible module for  $\mathbb C$ and satisfy the actions given in Theorem \ref{t2.1}.
\end{proposition}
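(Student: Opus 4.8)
Throughout write $\phi$ for the representation of $T'=\textbf{m}d$ on the finite–dimensional irreducible module $\overline{V}^\alpha$ supplied by Proposition \ref{p2.2}, and recall from Lemma \ref{l2.2} that $\phi(T')$ is reductive. I would first reduce part 2 to part 1. Once $\textbf{m}^2 d$ annihilates $\overline{V}^\alpha$, the latter is irreducible over $T'/\textbf{m}^2 d\cong \textbf{m}/\textbf{m}^2$, a one–dimensional abelian Lie algebra spanned by the class of $I(1)=(t-1)d$; hence $\dim\overline{V}^\alpha=1$. Writing $c$ for the scalar by which $I(1)$ acts and noting $I(r)=(t^r-1)d\equiv r\,I(1)\pmod{\textbf{m}^2 d}$, the induced action (\ref{a2.18}) on $\overline{V}^\alpha\otimes A$ becomes $d_r(v\otimes t^s)=(s+\alpha+rc)\,v\otimes t^{s+r}$, which is exactly the $n=1$ case of Theorem \ref{t2.1} (with $V(\mu)$ the trivial $\mathfrak{sl}_1$–module). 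Thus all the work is in part 1.

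For part 1 I would introduce $X_k:=(t-1)^{k+1}d_{-1}\in\textbf{m}^{k+1}d$ for $k\ge 0$. Using Lemma \ref{l2.5}(1) one checks $X_0=(1-t^{-1})d_0$, $[X_i,X_j]=(j-i)X_{i+j}$, and that $X_k$ spans $\textbf{m}^{k+1}d$ modulo $\textbf{m}^{k+2}d$ (the $\textbf{m}$–adic graded pieces being one–dimensional). In particular $[X_0,X_k]=kX_k$ already in $T'$ (equivalently, Lemma \ref{l2.5}(2)), hence holds as operators; finite dimensionality of $\phi(T')$ then forces $\phi(X_k)=0$ for all large $k$ (the $\phi(X_k)$ are $\mathrm{ad}\,\phi(X_0)$–eigenvectors lying in distinct eigenspaces), and shows each $\phi(X_k)$ with $k\ge1$ to be nilpotent, since it strictly raises the $\phi(X_0)$–spectrum.

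These facts power a downward induction, which I regard as the clean half of the argument: I claim $\phi(\textbf{m}^{m+1}d)=0\Rightarrow\phi(\textbf{m}^{m}d)=0$ for every $m\ge 2$. Indeed $(t-1)^m d_j=(t-1)^m d_{j-1}+(t-1)^{m+1}d_{j-1}$, so if $\phi(\textbf{m}^{m+1}d)=0$ then all generators $(t-1)^m d_j$ of $\textbf{m}^m d$ have the common image $\phi(X_{m-1})$; hence $\phi(\textbf{m}^m d)$ is at most one–dimensional. A one–dimensional ideal of the reductive algebra $\phi(T')$ is central and acts by a scalar, but that scalar is the nilpotent operator $\phi(X_{m-1})$ and is therefore $0$. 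This reduces part 1 to the single statement $\phi(\textbf{m}^M d)=0$ for one large $M$, equivalently the vanishing of the stable deep image $K=\bigcap_p\phi(\textbf{m}^p d)$.

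Establishing $K=0$ is the hard part, and is where reductivity must be used decisively. Grading $\phi(T')$ by the eigenvalues of $\mathrm{ad}\,\phi(X_0)$, a component of strictly positive integral weight is automatically a nilpotent operator, whereas a leading–$\textbf{m}$–degree computation shows that any $\mathrm{ad}\,\phi(X_0)$–eigenvector of $\phi(\textbf{m}^2 d)$ whose weight is not a positive integer must already lie in $K$. Hence it suffices to prove that $K$ has no component of weight $\le 0$: then $K$ consists of nilpotent operators, so this ideal of $\phi(T')$ is central and, being nil, is zero. The nonzero weights are controlled by the symmetry of the $\mathrm{ad}\,\phi(X_0)$–spectrum on the semisimple part $\mathfrak s$ of $\phi(T')$; the genuine obstacle is the weight–zero part, i.e. a simple factor of $\mathfrak s$ commuting with $\phi(X_0)$, which a single grading cannot detect. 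I would remove it with the second grading coming from the involution $\sigma:t\mapsto t^{-1}$ of $T'$ (which sends $X_0$ to $(t-1)d$ and produces a second family of nilpotent raising operators $\sigma(X_k)$): a factor surviving both gradings must be central, hence nilpotent, hence zero. Failing a clean execution of this step, the same vanishing $\textbf{m}^p d\subseteq\ker\phi$ can be forced by the explicit bracket manipulations used in Lemma \ref{l2.3}. With $K=0$ the downward induction yields $\textbf{m}^2 d\,\overline{V}^\alpha=0$, proving part 1 and with it part 2.
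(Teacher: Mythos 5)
Your reduction of part 2 to part 1 is fine and agrees with the paper, and two of your steps are correct alternatives to the paper's own: obtaining $\phi(X_k)=0$ for all large $k$ from the linear independence of $\mathrm{ad}\,\phi(X_0)$-eigenvectors with distinct eigenvalues (the paper gets the same conclusion via the characteristic polynomial of $(1-t^{-1})d_0$ together with Lemma \ref{l2.5}(2) and a gcd argument), and the downward induction $\phi(\textbf{m}^{m+1}d)=0\Rightarrow\phi(\textbf{m}^{m}d)=0$ for $m\ge 2$ (the paper instead re-runs the solvable-ideal argument of Lemma \ref{l2.3}, Case 2). The genuine gap is the base case of that induction: you need one $M$ with $\phi(\textbf{m}^{M}d)=0$, and this does \emph{not} follow from $\phi(X_k)=0$ for $k\gg 0$, because $\mathrm{span}\{X_k:k\ge M-1\}=(t-1)^{M}\mathbb C[t]\,t^{-1}d$ is a proper subspace of $\textbf{m}^{M}d=(t-1)^{M}A\,d$ (it misses, e.g., $(t-1)^{M}d_{j}$ for $j\le -2$). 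Your proposed bridge --- the weight grading, the spectral symmetry claim, the involution $t\mapsto t^{-1}$ --- is an unproven sketch (in particular the assertion that an $\mathrm{ad}\,\phi(X_0)$-eigenvector of $\phi(\textbf{m}^2d)$ of non-positive weight already lies in $K$ is never justified), and your fallback to ``the explicit bracket manipulations used in Lemma \ref{l2.3}'' does not transfer: those computations live in $DerA\otimes B$, where the bracket multiplies ideal degrees, $[x\otimes M^{i},\,y\otimes M^{j}]\in[x,y]\otimes M^{i+j}$, whereas in $\textbf{m}d\subset A\rtimes DerA$ the bracket can drop $\textbf{m}$-degree by one (the term $(l-k)(t-1)^{k+l-1}d_{i+j}$ in Lemma \ref{l2.5}(1)), so the degree bookkeeping and the isolation of a single ``$d_0$-type'' term fail as stated.

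The missing step has a one-line fix, and it is exactly what the paper does. Lemma \ref{l2.5}(1) yields the identity in $T'$
\begin{align*}
[(t-1)^{l+2}d_{-1},(t-1)d_{i}]-[(t-1)^{l+1}d_{-1},(t-1)^{2}d_{i}]=-2(t-1)^{l+2}d_{i},
\end{align*}
all four entries of which lie in $T'$. Once $\phi(X_{l})=\phi(X_{l+1})=0$ for all large $l$, applying the representation $\phi$ annihilates the left-hand side, hence $\phi\bigl((t-1)^{l+2}d_{i}\bigr)=0$ for every $i\in\mathbb Z$, i.e. $\phi(\textbf{m}^{l+2}d)=0$. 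This supplies the base case; your downward induction then proves part 1, and your part 2 argument goes through unchanged.
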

\begin{proof}
1. It is sufficient to prove that $\textbf{m}^kd\overline V^\alpha=0$ for some $k \in \mathbb N.$ Because then using same argument as Lemma \ref{l2.3} (Case 2) we have the result. Consider the operator $(1-t^{-1})d_{0}: \overline V^\alpha \to \overline V^\alpha$. Let $f(\lambda) $ be the characteristic polynomial of this operator. Since we are over the field $\mathbb C$, there exists a $k\in \mathbb N$ such that $gcd(f(\lambda), f(\lambda -l))=1$ for all $l> k$. Now using Lemma \ref{l2.5}(2), we get $(t-1)^{l+1}d_{-1}.v=0$ for all $l > k$ and $v \in \overline V^\alpha.$ 
By Lemma \ref{l2.5}(1), we have the relation $$[(t-1)^{l+2}d_{-1},(t-1)d_{i}]-[(t-1)^{l+1}d_{-1},(t-1)^2d_{i}]=-2(t-1)^{l+2}d_i$$ for all $i \in \mathbb Z,l\in \mathbb N.$ This implies that $(t-1)^{l+2}t^id.v=0$ for all $v\in \overline V^\alpha$, $i \in \mathbb Z,$ $l >k.$ Therefore there exists $k_0 \in \mathbb N$ such that $\textbf{m}^{k_0}d\overline V^\alpha=0.$\\

2. Let us define a map $\phi:{\textbf{m}d }/{\textbf{m}^2d} \to \mathbb C$ by $\phi((t-1)f(t)d)= f(1)$. It is easy to see that $\phi$ is a Lie algebra isomorphism and image of $I(r)$ under this map is $r$ for all $r \in \mathbb Z.$ Now by Proposition \ref{p2.2} and the action defined by (\ref{a2.18}), we have
$$ d_r.v\otimes t^s=r(1.v)\otimes t^{r+s}+(s+\alpha)v\otimes t^{r+s} $$

\end{proof}

\begin{theorem}\label{t2.3}
Let $V$ be an irreducible weight module for $\tau$ with finite dimensional weight spaces. Also let the action of $A\otimes B$ on $V$ be associative and $1.v=v$ for all $v \in V.$ Then $V \simeq F^\alpha(\mu,c)$ for some dominant integral weight $\mu$, $c \in \mathbb C$ and $\alpha \in \mathbb C^n$. Actions of the elements of $\tau$ on $F^\alpha(\mu,c)$ are given by 
$$D(u,r)bv\otimes t^m=\psi(b)\{(u,m+\alpha)v\otimes t^{m+r}+\displaystyle{\sum_{i,j}(u_ir_jE_{ji}v)\otimes t^{m+r}}   \},$$ 

$$ t^rb.v\otimes t^m=\psi(b)v\otimes t^{r+m},$$  \\
 for all $u \in \mathbb C^n, r \in \mathbb Z^n, b \in B$ and for some algebra homomorphism $\psi: B \to \mathbb C.$ \\
  
\end{theorem}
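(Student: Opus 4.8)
The plan is to combine the reduction theorem (Theorem \ref{t2.2}) with the classification of irreducible $A \rtimes DerA$-modules (Theorem \ref{t2.1} for $n \geq 2$, Proposition \ref{p2.3} for $n = 1$). Most of the genuine difficulty has already been absorbed into these earlier results, so the final statement should follow by assembling them together with the evaluation homomorphism $\psi$.

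First I would invoke Theorem \ref{t2.2}. Since $V$ is an irreducible weight $\tau$-module with finite-dimensional weight spaces on which $A \otimes B$ acts associatively and $1$ acts as identity, Theorem \ref{t2.2} shows that $V$ is in fact an irreducible module for $A \rtimes DerA$. More importantly, its proof establishes the precise relations $D(u,r)b = \psi(b)\,D(u,r)$ and $t^r b = \psi(b)\,t^r$ on $V$ for all $u \in \mathbb{C}^n$, $r \in \mathbb{Z}^n$ and $b \in B$, where $\psi: B \to \mathbb{C}$ is the algebra homomorphism determined by the central action of $1 \otimes B$. Thus the entire $\tau$-action is recovered from the $A \rtimes DerA$-action by evaluating $B$ through $\psi$; that is, $V$ is a single-point evaluation module.

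Next I would verify that $V$, viewed as an $A \rtimes DerA$-module, meets every hypothesis of Theorem \ref{t2.1}: it is irreducible, it is a weight module for $H = \mathbb{C} \oplus \mathfrak{h}$ with finite-dimensional weight spaces (inherited directly from the $\tau$-weight decomposition under the identification $H = H \otimes 1$), the algebra $A$ acts associatively (being the restriction of the associative $A \otimes B$-action), and $1$ acts as identity. For $n \geq 2$, Theorem \ref{t2.1} then yields $V \simeq F^\alpha(\mu, c)$ for some dominant integral weight $\mu$, scalar $c \in \mathbb{C}$ and $\alpha \in \mathbb{C}^n$, together with the explicit $A \rtimes DerA$-action recorded there. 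For $n = 1$ the same conclusion is furnished by Proposition \ref{p2.3}, where $\mathfrak{sl}_1 = 0$ forces $V(\mu)$ to be one-dimensional, so that $F^\alpha(\mu, c)$ degenerates to an intermediate-series tensor-field module.

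Finally I would lift this back to a $\tau$-action. Substituting $D(u,r)b = \psi(b)\,D(u,r)$ and $t^r b = \psi(b)\,t^r$ into the $A \rtimes DerA$ action formulas of Theorem \ref{t2.1} (and Proposition \ref{p2.3}) reproduces verbatim the two displayed formulas of the statement, with the scalar $\psi(b)$ appearing in front. Since no new computation is needed at this last stage, the real obstacle lies not in the present synthesis but in the two pillars supporting it: the reduction to a single-point evaluation carried out in Theorem \ref{t2.2}, and the extension of Larsson's classification to the rank-one case in Proposition \ref{p2.3}. The only point of care remaining is the honest check that weight-space finiteness and associativity descend faithfully from $\tau$ to $A \rtimes DerA$, which is immediate once one recalls that $H$ is identified with $H \otimes 1$ and that the $A$-module structure is simply the restriction of the $A \otimes B$-structure.
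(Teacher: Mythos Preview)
Your proposal is correct and matches the paper's own proof, which is the single line ``Follows from Theorem \ref{t2.1}, Theorem \ref{t2.2}, Proposition \ref{p2.2} and Proposition \ref{p2.3}.'' The only cosmetic difference is that the paper also cites Proposition \ref{p2.2} explicitly for the $n=1$ identification $V \simeq \overline V^\alpha \otimes A$, whereas you fold this into your invocation of Proposition \ref{p2.3}; since the latter already relies on the former, there is no gap.
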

\begin{proof}
Follows from Theorem \ref{t2.1}, Theorem \ref{t2.2}, Proposition \ref{p2.2} and Proposition \ref{p2.3}.
\end{proof}

\vspace{2cm}
\textbf{Acknowledgments:} The first author would like to thank Prof. Rencai Lu for some helpful discussion regarding one variable case of $(A,DerA)$.

\vspace{2cm}


\begin{thebibliography}{}
\bibitem{1} S. Esawara Rao, Partial classification of modules for Lie-algebra of diffeomorphisms of $d-$dimensional torus.
\bibitem{2} Xiangqian Guo, Rencai Lu, Kaiming Zhao, Simple Harish-chandra modules and Verma modules over loop-virasoro algebras, Forum Math 23(5), 1029-1052, 2011 
 \bibitem{3} J. E. Humphreys, Introduction to Lie algebras and Representation theory. Second printing, revised.
Graduate Texts in Mathematics, 9. Springer-Verlag, New York-Berlin, 1978
\bibitem{4} Yan-an Cai, Rencai Lu, Yan Wang; Weight modules for map (super) algebra related to Virasoro algebra ; arxiv: 2007.02582v1
\bibitem{5} A savage, Classification of irreducible quasifinite modules over map Virasoro algebras, Transformation groups 17(2), 547-570, 2012
\bibitem{6} S. Eswara Rao, Modules for Loop Affine- Virasoro algebras, Journal of Algebra and Its Applications, Vol.20, No.4,(2021),2150055(13 pages)
\bibitem{7} E. Neher , A Savage., P. Senesi, Irreducible finite dimensional representations of equivariant map algebras, Trans. AMS, 364(5) 2617-2646(2012).
\bibitem{8}  V.Chari, G. Fourier, T. Khandai, A Categorical approch to Weyl modules, Transfromation Groups,15(3), 517-549 (2010).
\bibitem{9} S. Eswara Rao, Sachin S. Sharma, Sudipta Mukherjee, Integrable modules for the loop Affine-Virasoro algebras, arXiv:2005.08175v1 [math.RT] 17 May 2020
\bibitem{10} Yuly Billig, Vyacheslav Futorny, Classification of irreducible representations of Lie algebra of vector fields on a torus. J. Renie Angew. Math 720(2016) 199-216.
\bibitem{11} Xiangqian Guo and  Kaiming Zhao, Irrefucible weight modules over Witt algebras, Proceedings of ams, vol 139, no 7, july 2011, 2367-2373.
\bibitem{12} Ramos, E., Sah, C. H., and Shrock, R. E., Algebra of diffeomorphisms of $N$-torus, J. Math phys, 31, 1805-1816,1990.
\bibitem {13} Larsson, T. A., Conformal fields: A class of representations of Vect (N), Int. J. Mod. Phys. A 7, 6493-6508(1992).
\bibitem{14} P Chakraborty, P Batra, Classification of irreducible integrable representations for loop toroidal Lie algebras, arXiv:2007.06415 
\bibitem{15}  O. Mathieu, Classification of Harish-Chandra modules over the Virasoro Algebras, Invent. Math. 107, 225-234, 1992
\bibitem{16}  Mazorchuk, Volodymyr; Zhao, Kaiming Classification of simple weight Virasoro modules with a finite-dimensional weight space. J. Algebra, 307, 2007, 209-214 
\bibitem{17} Vyjayanthi Chari and Andrew Pressley. Unitary representations of the Virasoro
algebra and a conjecture of Kac. Compositio Math., 67(3):315-342, 1988
\bibitem{18} S.Eswara Rao, Irreducible representations for Toroidal Lie algebras, Journal of Pure and Applied Algebra, 202(1-3), 2005, 102-117
\bibitem{19} D. Britten, M. Lau, F. Lemire, Weight modules for current algebras, J. Algebra, 440(2015), 245-263.
\end{thebibliography}
\end{document}